\newtheorem{theorem}{Theorem}[section]
\newtheorem{lemma}[theorem]{Lemma}
\newtheorem{cor}[theorem]{Corollary}
\theoremstyle{definition}
\theoremstyle{definition}
\numberwithin{equation}{section}
\def\eq#1{(\ref{#1})}
\def\ve{\varepsilon}
\def\N{\mathbb{N}}
\def\R{\mathbb{R}}
\begin{document}

\title{Classification of radial solutions for elliptic systems  driven by the $k$-Hessian operator}


\author{Marius Ghergu\footnote{School of Mathematics and Statistics,
    University College Dublin, Belfield, Dublin 4, Ireland; {\tt
      marius.ghergu@ucd.ie}}$\,$ \footnote{Institute of Mathematics Simion Stoilow of the Romanian Academy, 21 Calea Grivitei Street,
010702 Bucharest, Romania} 
}




\maketitle

\begin{abstract}
We are concerned with non-constant positive radial solutions of the system
$$
\left\{
\begin{aligned}
S_k(D^2 u)&=|\nabla u|^{m} v^{p}&&\quad\mbox{ in }\Omega,\\
S_k(D^2 v)&=|\nabla u|^{q} v^{s} &&\quad\mbox{ in }\Omega,
\end{aligned}
\right.
$$
where $S_k(D^2u)$ is the $k$-Hessian operator of $u\in C^2(\Omega)$ ($1\leq k\leq N$) and $\Omega\subset\R^N$ $(N\geq 2)$ is either a ball or the whole space. The exponents satisfy $q>0$, $m,s\geq 0$, $p\geq s\geq 0$ and $(k-m)(k-s)\neq pq$.
In the case where $\Omega$ is a ball, we classify all the positive radial solutions according to their behavior at the boundary. Further, we consider the case $\Omega=\R^N$ and find that the above system admits non-constant positive radial solutions if and only if $0\leq m<k$ and $
pq < (k-m)(k-s)$. Using arguments from three component cooperative and irreducible dynamical systems we deduce the behavior at infinity of such solutions.

\end{abstract}

\noindent{\bf Keywords:} Radially symmetric solutions, $k$-Hessian equation;  asymptotic behavior, cooperative and irreducible
dynamical systems

\medskip

\noindent{\bf 2010 AMS MSC:} 35J47, 35B40, 70G60


\section{Introduction}

In this paper we study positive non-constant radially symmetric solutions of the system
\begin{equation}\label{hess}
\left\{
\begin{aligned}
S_k(D^2 u)&=|\nabla u|^{m} v^{p}&&\quad\mbox{ in }\Omega,\\
S_k(D^2 v)&=|\nabla u|^{q} v^{s} &&\quad\mbox{ in }\Omega,
\end{aligned}
\right.
\end{equation}
where $\Omega$ is either an open ball $B_R\subset \R^N$ $(N\geq 2)$, centred at the origin and having radius $R>0$, or $\Omega=\R^N$. The exponents $m,p,q,s$ are assumed  to satisfy
$$
q>0,\quad  m,s\geq 0, \quad p\geq s\geq 0
$$
and
\begin{equation}\label{deltat}
\delta:=(k-m)(k-s)-pq\neq 0.
\end{equation}

Throughout this paper, $S_k(D^2u)$ denotes the $k$-Hessian operator of $u\in C^2(\Omega)$, $1\leq k\leq N$, defined as follows. 
Let $\Lambda=(\lambda_1, \lambda_2, \dots, \lambda_N)$ be the eigenvalues of the Hessian matrix $D^2u$. Then,
$$
S_k(D^2 u)=P_k(\Lambda)=\sum_{1\leq i_1<i_2<\dots<i_k\leq N}\lambda_{i_1}\lambda_{i_2} \dots \lambda_{i_k},
$$
where $P_k(\Lambda)$ is the $k$-th elementary symmetric polynomial in the eigenvalues $\Lambda$. We point out that $\{S_k\}_{1\leq k\leq N}$ is a family of operators which contains the Laplace operator $(k = 1)$ and the Monge-Amp\`ere operator $(k =N)$. 

For $2\leq k\leq N$ the operators $S_k$ are fully nonlinear. Further, $S_k$ are not elliptic in general, unless they are restricted to the class
$$
\Gamma_k=\{u\in C^2(\Omega):S_i(D^2 u)\geq 0\mbox{ in }\Omega\mbox{ for all }1\leq i\leq k\}.
$$

In this paper we study {\it non-constant positive radial solutions} of \eqref{hess}, that is, solutions $(u,v)$ which fulfill:
\begin{itemize}
\item $u, v\in \Gamma_k$ are positive and radially symmetric;
\item $u$ and $v$ are not constant in any neighbourhood of the origin;
\item $u$ and $v$ satisfy \eqref{hess}.
\end{itemize}
If $\Omega=\R^N$, such solutions of \eqref{hess} will be called {\it global radial solutions}.

Throughout this paper, we identify radial solutions
$(u,v)$ with their one variable representant, that is, $u(x)=u(r)$,
$v(x)=v(r)$, $r=|x|$. It is now a standard argument (see, e.g., \cite{JB2010}) to check that any positive radial solution $(u,v)$ of \eqref{hess} in $B_R$ satisfies
\begin{equation}\label{main0}
\left\{
\begin{aligned}
&\binom{n-1}{k-1} r^{1-N}\Big[ r^{N-k}|u'|^{k-1}u' \Big]'=|u'(r)|^{m}v^{p}(r) &&\quad\mbox{ for all } 0<r<R,\\
&\binom{n-1}{k-1} r^{1-N}\Big[ r^{N-k} |v'|^{k-1}v'\Big]'=|u'(r)|^{q} v^{s}(r) &&\quad\mbox{ for all } 0<r<R,\\
&u'(0)=v'(0)=0, u(r)>0, v(r)>0 &&\quad\mbox{ for all } 0<r<R,
\end{aligned}
\right.
\end{equation}
where $\binom{n-1}{k-1}$ stands for the binomial coefficient for the integers $n-1\geq k-1$. A scaling argument yields easily that \eqref{main0} is equivalent to
\begin{equation}\label{main}
\left\{
\begin{aligned}
& r^{1-N}\Big[ r^{N-k}|u'|^{k-1}u' \Big]'=|u'(r)|^{m}v^{p}(r) &&\quad\mbox{ for all } 0<r<R,\\
& r^{1-N}\Big[ r^{N-k} |v'|^{k-1}v'\Big]'=|u'(r)|^{q} v^{s}(r) &&\quad\mbox{ for all } 0<r<R,\\
&u'(0)=v'(0)=0, u(r)>0, v(r)>0 &&\quad\mbox{ for all } 0<r<R.
\end{aligned}
\right.
\end{equation}

Partial differential equations related to the $k$-Hessian operator have been widely investigated in the last four decades. The results in Caffarelli, Nirenberg and Spruck \cite{CNS1985} (see also Ivochina \cite{I1980}) have opened up new mathematical methods in this direction. Ji and Bao \cite{JB2010} obtained Keller-Osserman type conditions for the existence of a solution to $S_k(D^2 u)=f(u)$ in the entire space $\R^N$.

The study of the system \eqref{hess} is motivated by the semilinear case
\begin{equation}\label{diaz1}
\left\{
\begin{aligned}
\Delta u &=v&&\quad\mbox{ in }\Omega,\\
\Delta v &=|\nabla u|^2&&\quad\mbox{ in }\Omega,
\end{aligned}
\right.
\end{equation}
discussed in \cite{DLS2005} as a steady state model of a viscous, heat conducting fluid. The time dependent version of \eqref{diaz1}, namely,
\begin{equation}\label{diaz2}
\left\{
\begin{aligned}
u_t-\Delta u &=\theta&&\quad\mbox{ in }\Omega\times (0,T),\\
\theta_t-\Delta \theta &=|\nabla u|^2&&\quad\mbox{ in }\Omega\times (0,T),
\end{aligned}
\right.
\end{equation}
is investigated in \cite{DRS2007} and \cite{DRS2008}. In the above coupled equations, $u$ stands for the speed and $\theta$ stands for the temeprature 
of a unidirectional flow,
independent of distance in the flow direction. Note that the steady states of \eqref{diaz2} corresponds after the change $\theta=-v$ to solutions of system \eqref{diaz1}. Further extensions of \eqref{diaz1} to the case of general nonlinearities appear in Singh \cite{S2015}, Filippucci and
Vinti \cite{FV2017}. A recent work of Ghergu, Giacomoni and Singh  \cite{GGS2019} investigates radial solutions of the quasilinear system
$$
\left\{
\begin{aligned}
\Delta_\sigma u&=|\nabla u|^{m} v^{p}&&\quad\mbox{ in }\Omega,\\
\Delta_\sigma v&=|\nabla u|^{q} v^{s} &&\quad\mbox{ in }\Omega,
\end{aligned}
\right.
$$
where $\Delta_\sigma u={\rm div}(|\nabla u|^{\sigma-2}\nabla u)$, $\sigma>1$.

We should point out that the system \eqref{hess} and its radially symmetric counterpart \eqref{main0} is not a singular system (as for instance in \cite{G2009, GR2008}). The difficulty in the study of \eqref{hess} lies in the presence of the gradient terms $|\nabla u|^m$ and $|\nabla
u|^q$  in the right-hand side of \eqref{hess} which, as we shall see, leads to a rich structure of the solution set.   In the following, for a function $f:(0,R)\to
\R$ we denote $f(R^-)=\lim_{r\nearrow R} f(r)$, provided such a
limit exists. Also, $C,c,c_1,c_2...$ stand for positive constants whose values may chang on each occurence.

Our first result is concerned with the case where $\Omega$ is a ball.

\begin{theorem}\label{thm1}
Assume $\Omega=B_R$, $q>0$, $m,s\geq 0$, $p\geq s\geq 0$ and $\delta\neq 0$. Then:
\begin{enumerate}
\item [(i)] There are no positive radial solutions $(u, v)$ of \eqref{hess} with $u(R^{-})=\infty$ and $v(R^{-})<\infty$.
\item [(ii)] All positive radial solutions of \eq{hess} are bounded if and only if
$$
k>m\quad\mbox{ and}\quad 
pq < (k-m)(k-s).
$$
\item [(iii)] There are positive radial solutions $(u, v)$ of \eq{hess} with $u(R^{-})<\infty$ and $v(R^{-})=\infty$
if and only if
$$
k>m\quad\mbox{ and}\quad 
pq> p(k+1)+(k-m+1)(k-s).
$$
\item [(iv)] There are positive radial solutions $(u, v)$ of \eq{hess} with $u(R^{-})= v(R^{-})=\infty$ if and only if
$$
k>m\quad\mbox{ and}\quad 
(k-m)(k-s)<pq\leq  p(k+1)+(k-m+1)(k-s).
$$
\end{enumerate}

\end{theorem}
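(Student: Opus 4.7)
I would start by integrating the radial ODEs once to obtain the key identities
$$W(r) := r^{N-k}(u'(r))^k = \int_0^r t^{N-1}(u'(t))^m v(t)^p\,dt,\quad Z(r) := r^{N-k}(v'(r))^k = \int_0^r t^{N-1}(u'(t))^q v(t)^s\,dt,$$
so both $W$ and $Z$ are non-negative and non-decreasing on $(0,R)$. A short Frobenius-type expansion near the origin (positing $u'(r) \sim c r^a$ and balancing powers in the first equation) forces $a = k/(k-m)$, so non-constancy already demands $k > m$; for $m \geq k$ the ODE for $W$ is superlinear with $W(0)=0$ and the only solution is $W \equiv 0$. Since $u'(r) = (W(r)/r^{N-k})^{1/k}$ and $W$ is monotone, one also gets the equivalence $u(R^-) = +\infty \iff W(R^-) = +\infty \iff u'(R^-) = +\infty$.

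\textbf{Proof of (i).} Suppose for contradiction that $v(R^-) \leq M < \infty$ while $u(R^-) = +\infty$. The first identity yields
$$W'(r) = r^{N-1}(u'(r))^m v(r)^p \leq M^p\, r^{N-1 + (k-N)m/k}\, W(r)^{m/k}.$$
Since the $r$-factor is bounded on $[R/2,R]$ and $m/k < 1$, separating variables in the Bernoulli-type inequality $W' \leq C W^{m/k}$ and integrating show that $W^{1-m/k}$ stays bounded up to $r = R$, contradicting $W(R^-) = +\infty$.

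\textbf{Critical-exponent analysis.} For the classification in (ii)--(iv) I would test the ansatz $u'(r) \sim c(R-r)^{-\gamma}$, $v(r) \sim b(R-r)^{-\beta}$ as $r \nearrow R$. Substituting into (1.6) and matching the leading powers produces the linear system
$$(k-m)\gamma - p\beta = -1,\qquad -q\gamma + (k-s)\beta = -(k+1),$$
whose unique (by $\delta \neq 0$) solution is
$$\gamma = \frac{(k-s) + p(k+1)}{pq - (k-m)(k-s)},\qquad \beta = \frac{q + (k-m)(k+1)}{pq - (k-m)(k-s)}.$$
Under $k>m$ and $p \geq s \geq 0$ both numerators are strictly positive, so $\gamma,\beta > 0$ precisely when $pq > (k-m)(k-s)$; a direct rearrangement gives $\gamma \geq 1$ (i.e.\ $u$ is unbounded, with a logarithmic profile at the equality $\gamma = 1$) if and only if $pq \leq p(k+1) + (k-m+1)(k-s)$. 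The three regimes $\beta \leq 0$; $\beta > 0,\ \gamma \geq 1$; and $\beta > 0,\ 0 < \gamma < 1$ correspond exactly to the alternatives (ii), (iv), (iii).

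\textbf{Completing the classification and the main obstacle.} The ``only if'' directions in (ii)--(iv) then follow by upgrading the Bernoulli estimate of part (i): for any candidate solution, I would combine the two integral identities (using $p \geq s$ to decouple a product of powers $W^a Z^b$) and conclude that the admissible asymptotic exponents must lie in exactly the predicted cone, ruling out the forbidden regimes. The ``if'' directions require constructing solutions realizing each boundary behavior; for this I would build sub- and supersolutions of the model type $C(R-r)^{-\gamma}$, $C(R-r)^{-\beta}$, and run a shooting argument in the initial pair $(u(0),v(0))$, using continuous dependence of the maximal existence radius on initial data to hit each target. The step I expect to be the main obstacle is the critical threshold $pq = p(k+1) + (k-m+1)(k-s)$ separating (iii) and (iv): there the pure-power ansatz degenerates at $\gamma = 1$ into a logarithmic blow-up of $u$, and a careful refined asymptotic expansion is needed both to confirm the borderline belongs to case (iv) and to make the shooting argument continuous across the threshold.
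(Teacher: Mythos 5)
Your preliminary reduction (necessity of $k>m$) and your part (i) are sound and essentially the paper's own arguments, and your ansatz exponents $\gamma,\beta$ do reproduce the correct thresholds. But for (ii)--(iv) the proposal stops at a formal power-law matching plus an unexecuted plan, and the two steps that carry the actual weight are missing. First, the ``only if''/``all solutions bounded'' directions need an a priori estimate valid for \emph{every} solution, not only for solutions assumed to blow up like a pure power: the paper proves (Lemma \ref{secondestim}) that $\Psi:=(u')^{k-m}$ satisfies $c_1\Psi^{\sigma}\le \Psi'\le c_2\Psi^{\sigma}$ near $R$, with $\sigma=\frac{p}{k-m}\cdot\frac{q+(k-m)(k+1)}{(p+1)k+p-s}$, obtained by cross-multiplying the two once-integrated equations and using the monotonicity of $v$ together with $p\ge s$. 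This single two-sided inequality does everything: if $\sigma<1$ (equivalently $\delta>0$) it forces $\Psi$, hence $u$, and via $cv^{p}\le\Psi^{\sigma}$ also $v$, to remain bounded; if $\sigma>1$ it forces finite-radius blow-up of $v$ for every solution and pins the exact rate $u'(r)\asymp (R-r)^{-1/((k-m)(\sigma-1))}$ (your $\gamma$ equals $1/((k-m)(\sigma-1))$), so the dichotomy between (iii) and (iv) --- including the borderline case, where no refined logarithmic expansion is needed because the two-sided bound already makes $\int u'$ diverge --- follows by integration. Your sentence about ``combining the two integral identities to decouple $W^{a}Z^{b}$'' points in this direction but is not carried out, and the conclusion you draw from the ansatz (``admissible asymptotic exponents must lie in the predicted cone'') presupposes that any blow-up is of power type, which is precisely what has to be proved.

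Second, for the existence (``if'') directions your shooting scheme via continuous dependence of the maximal existence radius is both unsubstantiated and unnecessary: it would itself require the finite-radius blow-up statement above plus a continuity/attainment argument you do not sketch. The paper instead produces a local solution near the origin by a fixed-point argument and then invokes the scaling invariance $u_\lambda(x)=\lambda^{1+\frac{p(2k-1)+k(k-s)}{\delta}}u(x/\lambda)$, $v_\lambda(x)=\lambda^{\frac{(2k-1)(k-m)+kq}{\delta}}v(x/\lambda)$ to place the (necessarily finite) blow-up radius exactly at $R$; the behavior of $u$ there is then automatic from the rate above. So while your computation of the critical exponents is correct and consistent with the theorem, the proposal as written has a genuine gap: the rigorous two-sided gradient estimate and a workable existence mechanism (the paper's scaling, or a genuine substitute) are missing, and without them neither the boundedness claim in (ii) nor the classification in (iii)--(iv) is established.
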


As an immediate consequence of the above result we obtain optimal conditions for the existence of boundary blow-up solutions for equations and system. In such a setting, the Keller-Osserman condition plays a crucial role (see, e.g. \cite{DGGW2012, GR2008book, MP2019}). 

Let us first consider the boundary blow-up system:
\begin{equation}\label{hessblow1}
\left\{
\begin{aligned}
S_k(D^2 u)&=|\nabla u|^{m} v^{p}&&\quad\mbox{ in }B_R,\\
S_k(D^2 v)&=|\nabla u|^{q} v^{s} &&\quad\mbox{ in }B_R,\\
u=v&=\infty &&\quad\mbox{ on }\partial B_R,
\end{aligned}
\right.
\end{equation}
where the boundary condition in \eqref{hessblow1} is understood in the following sense
$$
u(x)\to \infty, \ v(x)\to \infty\quad\mbox{ as }{\rm dist}(x, \partial B_R)\to 0.
$$
From Theorem \ref{thm1} (iv) we find:
\begin{cor}\label{ccblow1}
Assume $q>0$, $m,s\geq 0$, $p\geq s\geq 0$ and $\delta\neq 0$. 
Then, \eqref{hessblow1} admits positive radial solutions if and only if
$$
k>m\quad\mbox{ and}\quad 
(k-m)(k-s)<pq\leq  p(k+1)+(k-m+1)(k-s).
$$
\end{cor}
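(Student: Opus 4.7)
The plan is to deduce Corollary \ref{ccblow1} directly from Theorem \ref{thm1}(iv) by rephrasing the boundary blow-up condition in \eqref{hessblow1} in terms of one-variable limits at $r=R$, which is the language in which Theorem \ref{thm1} is stated. The whole analytical content has already been established in Theorem \ref{thm1}; what remains is purely a matter of identifying the two formulations.

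First I would check that positive radial solutions of \eqref{hessblow1} coincide with the positive radial solutions $(u,v)$ of \eqref{hess} in $B_R$ that satisfy $u(R^-)=v(R^-)=\infty$. Since $u$ and $v$ are radial, writing them as functions of $r=|x|$ gives $\mathrm{dist}(x,\partial B_R)=R-|x|$, so the convergence $u(x)\to\infty$, $v(x)\to\infty$ as $\mathrm{dist}(x,\partial B_R)\to 0$ is equivalent to the simultaneous limits $u(r)\to\infty$ and $v(r)\to\infty$ as $r\nearrow R$. Conversely, any radial solution of \eqref{hess} with $u(R^-)=v(R^-)=\infty$ satisfies the boundary requirement of \eqref{hessblow1} in the stated sense.

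Once this equivalence is in hand, Theorem \ref{thm1}(iv) yields both implications at once: the existence of a positive radial solution $(u,v)$ of \eqref{hess} with $u(R^-)=v(R^-)=\infty$ holds if and only if
$$
k>m \quad\text{and}\quad (k-m)(k-s)<pq\leq p(k+1)+(k-m+1)(k-s),
$$
which is precisely the conclusion of Corollary \ref{ccblow1}. There is essentially no obstacle in this argument; the only point worth flagging is that one should not expect a weaker condition under which only one of $u$ or $v$ blows up at $\partial B_R$, but Theorem \ref{thm1}(i)--(iii) show that the remaining scenarios correspond to distinct parameter ranges, so the characterisation given by (iv) is the only one relevant to \eqref{hessblow1}.
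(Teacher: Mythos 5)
Your proposal is correct and matches the paper's treatment: Corollary \ref{ccblow1} is obtained there exactly as you describe, by identifying positive radial solutions of \eqref{hessblow1} with radial solutions of \eqref{hess} satisfying $u(R^-)=v(R^-)=\infty$ and invoking Theorem \ref{thm1}(iv). No further comment is needed.
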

Corollary \ref{ccblow1} provides optimal conditions for the existence of radial solutions of the   problem:
\begin{equation}\label{hessblow2}
\left\{
\begin{aligned}
S_k(D^2 u)&=u^p |\nabla u|^{q}&&\quad\mbox{ in }B_R,\\
u&=\infty &&\quad\mbox{ on }\partial B_R.
\end{aligned}
\right.
\end{equation}
As before, the boundary condition in \eqref{hessblow2} means $u(x)\to \infty$ as ${\rm dist}(x, \partial B_R)\to 0$. Letting $m=q$ and $p=s$ in Corollary \ref{ccblow1} we find:

\begin{cor}\label{ccblow2}
Assume $p\geq 0$, $q>0$ and $(k-p)(k-q)\neq pq$. 
Then, \eqref{hessblow2} admits positive radial solutions if and only if
$$
k>q\quad\mbox{ and}\quad 
pq>(k-p)(k-q).
$$
\end{cor}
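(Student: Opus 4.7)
\medskip

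\noindent\textbf{Proof plan for Corollary \ref{ccblow2}.} The strategy is to reduce to Corollary \ref{ccblow1} via the substitution $m=q$, $s=p$, together with the observation that with these exponents the two equations of \eqref{hessblow1} coincide once we impose $v=u$. Thus a positive radial solution $u$ of \eqref{hessblow2} is the same object as a positive radial solution $(u,u)$ of \eqref{hessblow1} with $m=q$, $s=p$.

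\medskip

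\noindent\textbf{Necessity.} If $u$ is a positive radial solution of \eqref{hessblow2}, then $(u,u)$ is a positive radial solution of \eqref{hessblow1} with $m=q$, $s=p$. Corollary \ref{ccblow1} then yields
$$
k>q \quad\mbox{and}\quad (k-q)(k-p) < pq \leq p(k+1)+(k-q+1)(k-p).
$$
A direct expansion gives
$$
p(k+1)+(k-q+1)(k-p)-pq = k(k-q+1),
$$
which is strictly positive whenever $k>q$. Hence the upper bound is automatic, and only the lower inequality $pq>(k-p)(k-q)$ survives; this is exactly the condition claimed.

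\medskip

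\noindent\textbf{Sufficiency.} Assume $k>q$ and $pq>(k-p)(k-q)$. When $m=q$ and $s=p$, the two equations of the radial system \eqref{main} share the common right-hand side $|u'|^q v^p$. Working inside $\Gamma_k$ so that $u',v'\geq 0$, integration against the weight $s^{N-1}$, combined with $u'(0)=v'(0)=0$, gives
$$
u'(r)=\Big[r^{k-N}\int_0^r s^{N-1}|u'(s)|^q v(s)^p\,ds\Big]^{1/k}=v'(r),
$$
so that every radial solution of the symmetric system with $u(0)=v(0)$ automatically satisfies $u\equiv v$. One then runs the existence part of Theorem \ref{thm1}(iv) on the one-parameter family of initial data $u(0)=v(0)=\alpha$; equivalently, one runs the same ODE analysis directly on the scalar equation $r^{1-N}[r^{N-k}(u')^k]'=u^p(u')^q$. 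The blow-up radius is then matched to the prescribed $R$ by the scaling invariance of $S_k(D^2u)=u^p|\nabla u|^q$.

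\medskip

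\noindent\textbf{Main obstacle.} The only delicate point is to confirm that the existence scheme underlying Theorem \ref{thm1}(iv) descends to the diagonal $u=v$. The integral identity displayed above shows that this diagonal is invariant under the radial flow when $m=q$ and $s=p$, so no genuinely new estimates are required beyond observing this invariance and verifying that the shooting method can be constrained to the one-parameter family $u(0)=v(0)=\alpha$.
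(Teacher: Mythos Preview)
Your approach is essentially the same as the paper's: reduce to Corollary~\ref{ccblow1} via the substitution $m=q$, $s=p$ (the paper writes ``$m=q$ and $p=s$'' and nothing more). You have, however, supplied two points that the paper leaves implicit: (a) the verification that the upper bound $pq\leq p(k+1)+(k-m+1)(k-s)$ becomes automatic once $k>q$, via the identity $p(k+1)+(k-q+1)(k-p)-pq=k(k-q+1)>0$; and (b) the observation that with $m=q$, $s=p$ both equations of the radial system share the same right-hand side, so integrating forces $u'\equiv v'$ and hence $u\equiv v$ whenever $u(0)=v(0)$. Point (b) is what guarantees that the solution produced by the existence scheme of Theorem~\ref{thm1}(iv), started from $u(0)=v(0)=\alpha$, actually lies on the diagonal and therefore yields a solution of the scalar problem~\eqref{hessblow2}; the paper does not spell this out. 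Your use of the scalar scaling $u_\lambda(x)=\lambda^{(2k-q)/(k-p-q)}u(x/\lambda)$ to match the blow-up radius is also cleaner here than invoking the system scaling, since the two scaling exponents of the system need not coincide.
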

We point out that the case $q=0$ in \eqref{hessblow2} is discussed in \cite{ZF2020}.

In the following we shall be concerned the case where $\Omega$ coincides with the whole space $\R^N$. Directly from Theorem \ref{thm1} one has:

\begin{cor}\label{cc1}
Assume $\Omega=\R^N$, $q>0$, $m,s\geq 0$, $p\geq s\geq 0$ and $\delta\neq 0$. 
Then, \eqref{hess} admits non-constant global positive radial solutions if and only if
\begin{equation}\label{nonct}
0\leq m<k \quad\mbox{ and}\quad 
pq < (k-m)(k-s).
\end{equation}
\end{cor}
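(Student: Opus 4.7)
\emph{Proof proposal.}

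My plan is to deduce Corollary \ref{cc1} directly from Theorem \ref{thm1}(ii) by identifying a non-constant global positive radial solution of \eqref{hess} on $\R^N$ with a non-constant positive radial solution on every $B_R$ whose components remain finite at the boundary. With this identification, the corollary splits into a continuation argument (sufficiency) and a contrapositive reading of the boundary classification in Theorem \ref{thm1} (necessity).

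For sufficiency, assume $0\leq m<k$ and $pq<(k-m)(k-s)$. Starting from positive initial data $u(0)=a$, $v(0)=b$, local ODE theory applied to the singular IVP \eqref{main} yields a non-constant positive radial solution on some maximal interval $[0,R_{\max})$. If $R_{\max}<\infty$ then, for every $R<R_{\max}$, the restriction to $B_R$ is a positive radial solution, so Theorem \ref{thm1}(ii) keeps $u,v$ bounded as $r\uparrow R_{\max}$. To continue past $R_{\max}$ I would bound the derivatives: setting $w(r)=r^{N-k}(u'(r))^k$, the integrated first equation of \eqref{main} gives a Volterra-type inequality
\[
w(r)\leq C\int_0^r s^{\gamma}w(s)^{m/k}\,ds,
\]
with $\gamma=N-1-m(N-k)/k>-1$ and $m/k<1$, whence a nonlinear Gronwall--Bihari argument bounds $w$, hence $u'$, on $[0,R_{\max}]$; an analogous estimate handles $v'$. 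This permits extension past $R_{\max}$, contradicting maximality, so $R_{\max}=\infty$ and $(u,v)$ is global.

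For necessity, suppose a non-constant global positive radial solution $(u,v)$ exists; its restriction to every $B_R$ is then a bounded non-constant positive radial solution of \eqref{hess}. If \eqref{nonct} failed, then either $m\geq k$, or $k>m$ together with $pq>(k-m)(k-s)$ (using $\delta\neq 0$). Reading parts (i)--(iv) of Theorem \ref{thm1} as a complete classification of boundary behavior for non-constant positive radial solutions on a ball, the first case leaves no room for any such solution to exist, while in the second case every such solution is forced into one of the boundary blow-up regimes (iii) or (iv) at a finite radius, contradicting the bounded restriction. Hence \eqref{nonct} must hold. The main obstacle is the Volterra step in the sufficiency direction, where the condition $m<k$ must be converted into an effective $L^\infty$ bound on $u'$ from the mere boundedness of $u,v$; the necessity direction is then essentially a matter of unpacking the iff structure of Theorem \ref{thm1} together with its underlying rigidity that, under a given parameter regime, all non-constant positive radial solutions share the same boundary behavior.
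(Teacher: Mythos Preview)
Your proposal is correct. The paper itself offers no separate argument for Corollary~\ref{cc1}, stating only that it follows ``directly from Theorem~\ref{thm1}''; what you have written supplies the details. Your necessity direction is exactly the paper's implicit reasoning, and you rightly flag that the bare iff statements of Theorem~\ref{thm1}(ii)--(iv) are not enough: one needs the rigidity established inside its proof (if $\sigma>1$ then every maximal solution has $R_{\max}<\infty$ with $v(R_{\max}^-)=\infty$), since the statements alone only assert existence of blow-up solutions, not that all solutions blow up. For sufficiency your Bihari--Gronwall continuation step is a valid, self-contained alternative to what the paper's own machinery already yields: Lemma~\ref{secondestim} gives $\Psi'\Psi^{-\sigma}\le c_2$ with $\sigma<1$, which integrates to a polynomial bound on $\Psi=(u')^{k-m}$ (and then on $v$ via \eqref{k01}) over any finite interval, ruling out blow-up without introducing a new Volterra inequality. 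Your route has the merit of needing only the boundedness of $u,v$ from Theorem~\ref{thm1}(ii) plus an elementary nonlinear Gronwall argument; the paper's route simply reuses the sharper estimate \eqref{k02} already in hand.
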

The second condition in \eqref{nonct} reads $\delta>0$.

We next study the exact behavior at infinity of global positive radial solutions of \eqref{hess}. Note that the system \eqref{hess} is equivalent to \eqref{main0} in the radial setting. For the ease of our exposition, we shall discuss the behavior at infinity of solutions to the equivalent system \eqref{main}. In this direction we obtain the following result:

\begin{theorem}\label{thm3}
Assume $\Omega=\R^N$, $0\leq m<k$ and $\delta>0$. Then, any
non-constant global positive solution $(u, v)$ of \eq{main}
satisfies
\begin{equation}\label{abconst}
\lim_{|x|\rightarrow \infty} \frac{u(x)}{|x|^{{1+\frac{k(k-s+2p)}{\delta}}}}= A \quad \mbox{ and }\quad
\lim_{|x|\rightarrow \infty} \frac{v(x)}{|x|^{\frac{k(2k-2m+q)}{\delta}} }= B,
\end{equation}
where $A= A(N, m,p, q, s)>0$ and $B= B(N, m,p, q, s) >0$ are given by \eq{th2i} and \eq{th2j}.
\end{theorem}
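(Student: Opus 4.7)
The plan is to transform the radial ODE system \eqref{main} into an autonomous three-dimensional dynamical system in the logarithmic time variable $t=\log r$, show that it is cooperative and irreducible on $(0,\infty)^3$ with a unique interior equilibrium located precisely at the anticipated growth exponents, and then invoke Hirsch's convergence theorem for cooperative irreducible flows to force the asymptotic behavior \eqref{abconst}.

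First I would record that, since $u,v\in\Gamma_k$ are non-constant, one has $u',v'>0$ on $(0,\infty)$, and since Theorem \ref{thm1}(ii) rules out bounded non-constant solutions on arbitrarily large balls under the present hypotheses, $u(r),v(r)\to\infty$ as $r\to\infty$. Then, with $t=\log r$, I would introduce
\[
Z:=r^{k}(u')^{m-k}v^{p},\qquad W:=r^{k}(u')^{q}(v')^{-k}v^{s},\qquad y:=\frac{rv'}{v}.
\]
Rewriting \eqref{main} as $(N-k)+k(ru''/u')=Z$ and $(N-k)+k(rv''/v')=W$ and differentiating $\log Z$, $\log W$, $\log y$ in $t$, one obtains the closed autonomous system
\begin{equation*}
\begin{aligned}
\dot Z &= Z\bigl[\,k+\tfrac{m-k}{k}(Z-(N-k))+py\,\bigr],\\
\dot W &= W\bigl[\,N+\tfrac{q}{k}(Z-(N-k))-W+sy\,\bigr],\\
\dot y &= y\bigl[\,1+\tfrac{1}{k}(W-(N-k))-y\,\bigr].
\end{aligned}
\end{equation*}
All four off-diagonal Jacobian entries $pZ$, $qW/k$, $sW$, $y/k$ are nonnegative on the positive octant, and the dependency cycle $Z\to y\to W\to Z$ (activated by $p,q>0$) is strongly connected, so the flow is cooperative and irreducible. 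Setting all derivatives to zero and using $\delta\ne 0$ yields a unique interior equilibrium $(Z^{\ast},W^{\ast},y^{\ast})=(k\alpha+N-2k,\,k\beta+N-2k,\,\beta)$, whose components $\alpha=1+k(k-s+2p)/\delta$ and $\beta=k(2k-2m+q)/\delta$ solve the $2\times 2$ linear system $(k-m)\alpha-p\beta=2k-m$, $-q\alpha+(k-s)\beta=2k-q$ of determinant $\delta$.

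Next I would secure uniform \emph{a priori} upper and lower bounds on the trajectory inside the open positive octant. The strictly negative self-coefficients $(m-k)/k$, $-1$, $-1$ (where $m<k$ plays a crucial role) force decay of large values, giving the upper bounds; positivity of $u,v,u',v'$ together with $\delta>0$ and a standard barrier argument prevent $(Z,W,y)$ from approaching the boundary of the octant. Hirsch's theorem then gives $(Z(t),W(t),y(t))\to(Z^{\ast},W^{\ast},y^{\ast})$. Hyperbolicity of the linearization at the equilibrium (again guaranteed by $\delta>0$ and $m<k$) upgrades this to exponential convergence, so $\int^{\infty}(y(t)-\beta)\,dt$ converges and therefore $v(r)/r^{\beta}\to B$ for some $B>0$. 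Reading the limits $Z\to(A\alpha)^{m-k}B^{p}=Z^{\ast}$ and $W\to(A\alpha)^{q}B^{s-k}\beta^{-k}=W^{\ast}$ as a log-linear $2\times 2$ system in $(\log A,\log B)$ with determinant $\delta$ pinpoints $A$ and $B$ explicitly and yields the announced formulas \eqref{th2i}--\eqref{th2j}.

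The main obstacle will be the compactness of the trajectory inside the open octant: without uniform lower bounds keeping $Z,W,y$ away from $0$, the $\omega$-limit set could a priori include boundary equilibria corresponding to degenerate solutions inconsistent with \eqref{abconst}. A secondary difficulty is promoting the qualitative convergence $(Z,W,y)\to(Z^{\ast},W^{\ast},y^{\ast})$ to genuine pointwise limits for $u/r^{\alpha}$ and $v/r^{\beta}$, which requires the exponential rate supplied by hyperbolicity at the equilibrium rather than merely asymptotic equality.
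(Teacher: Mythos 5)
Your setup is the same as the paper's: your $(Z,W,y)$ are exactly the paper's $(Z,W,Y)$ from \eqref{xyzw} (with the correct power $r^k$ in $W$), your autonomous system coincides with the last three equations of \eqref{th2a}, and your interior equilibrium agrees with $\zeta_\infty$ in \eqref{th2d}. The genuine gap is in the middle, at precisely the point where the paper works hardest. First, ``Hirsch's theorem then gives $(Z,W,y)\to(Z^*,W^*,y^*)$'' is not a legitimate step: Hirsch's convergence result for cooperative irreducible flows (Theorem 4.1 of \cite{H1985}) is only an almost-everywhere statement, so it says nothing about the one trajectory generated by your solution, which may lie in the exceptional null set. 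The paper bridges this by combining the a.e.\ theorem on the order interval $[\zeta_*,\zeta_\infty]$ with the Limit Set Dichotomy (Theorem \ref{dich}) applied to a comparison point below the trajectory, and before that it must enumerate the boundary equilibria $\zeta_1,\zeta_2,\zeta_3$ (those with vanishing $Y$- or $W$-component) and exclude them from the relevant $\omega$-limit sets by intermediate-value and monotonicity arguments. Second, your compactness step is asserted rather than proved, and as stated it is false: the trajectory is not uniformly away from the boundary of the octant, since $Y(t)\to 0$ as $t\to-\infty$; and keeping $Y$ away from $0$ as $t\to+\infty$ is essentially equivalent to ruling out convergence to $\zeta_1,\zeta_2$, i.e.\ it is the heart of the theorem, not a ``standard barrier argument.'' Likewise, ``negative self-coefficients force decay of large values'' does not yield the ordered trapping $\zeta(t)<\zeta_\infty$ that the comparison machinery needs (and even plain forward boundedness is not obvious for this quadratic cooperative system). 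In the paper these bounds are Lemma \ref{l2c}, whose proof requires the integral estimates of Lemma \ref{firstestim} for the lower bounds and a delicate analysis as $r\to 0$ via the Generalized Mean Value Theorem to produce times $t_j\to-\infty$ with $\zeta(t_j)<\zeta_\infty$, after which Theorem \ref{comparis} propagates the bound forward.

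Two smaller remarks. Your final step (hyperbolicity $\Rightarrow$ exponential convergence $\Rightarrow$ integrability of $y-\beta$ $\Rightarrow$ $v/r^\beta\to B$, then solving a log-linear system for $A,B$) is a workable alternative, but it is heavier than necessary: the paper needs no convergence rate at all, because the algebraic identities $u^{\delta}/r^{\delta+k(k-s+2p)}=\bigl(X^{\delta}Y^{kp}Z^{k-s}W^{p}\bigr)^{-1}$ and $v^{\delta}/r^{k(2k-2m+q)}=\bigl(Y^{k(k-m)}Z^{q}W^{k-m}\bigr)^{-1}$ convert the limits of $(X,Y,Z,W)$ directly into \eqref{abconst} with the constants \eqref{th2i}--\eqref{th2j}. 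Also note that the asymptotic stability of $\zeta_\infty$ (the paper's Lemma \ref{l2b}, via a Routh--Hurwitz computation) is used not merely for rates but to upgrade ``$\zeta_\infty\in\omega$'' to ``$\omega=\{\zeta_\infty\}$''; some such stability input would have to appear in your argument as well.
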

Theorem \ref{thm3} above roughly says that any global radial solution $(u,v)$ of \eqref{main} stabilizes to 
$$
(U,V)=\Big(A|x|^{{1+\frac{k(k-s+2p)}{\delta}}},  B|x|^{\frac{k(2k-2m+q)}{\delta}}\Big)
$$ 
which is in fact a singular solution of \eqref{main}. 
In obtaining the exact behavior \eqref{abconst} we employ some results from three-component irreducible dynamical systems  from Hirsch \cite{H1985}. We recall these results in the first part of Section 3.

We point out that the requirement $\delta>0$ in \eqref{deltat} is a classical condition on superlinearity
of the system as it appears for instance in \cite{BVG2010}. Also, the value of the limits $A$ and $B$ in \eqref{abconst}
depend decreasingly on the space dimension $N\geq 2$. One may see this fact from their expressions in \eq{th2i} and \eq{th2j}.

In our next result we show that given any pair $(a,b)\in (0,\infty)\times (0,\infty)$, there exists a
unique positive global radial solutions of \eqref{hess} that emanates from $(a,b)$.

\begin{theorem}\label{thm4}
Assume $\Omega=\R^N$, $0\leq m<k$, $\delta>0$ and $1\leq k<N/2$. Then, for any $a> 0$, $b> 0$ there exists a unique non-constant
global positive radial solution of \eq{hess} such that $u(0)= a$ and $v(0)= b$.
\end{theorem}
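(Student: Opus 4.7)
My plan is to first establish local existence and uniqueness of a non-constant positive solution to the Cauchy problem associated with \eqref{main} at $r=0$, and then to extend this local solution to all of $[0,\infty)$ by invoking Theorem \ref{thm1}(ii). I would begin by integrating \eqref{main} once. Since any $k$-admissible radial solution has $u'(r),v'(r)\geq 0$, this yields the equivalent integrated form
$$
r^{N-k}(u'(r))^k=\int_0^r t^{N-1}(u'(t))^m v(t)^p\,dt,\qquad r^{N-k}(v'(r))^k=\int_0^r t^{N-1}(u'(t))^q v(t)^s\,dt.
$$
Taking $k$-th roots and integrating a second time produces an integral system which I would frame as a fixed-point equation $(u,v)=T(u,v)$ on a closed ball in a suitable weighted function space on $[0,r_0]$ with prescribed values $u(0)=a$, $v(0)=b$. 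A leading-order matching at $r=0$ forces $u'(r)\sim c_1(a,b)\,r^{k/(k-m)}$ and $v'(r)\sim c_2(a,b)\,r^{k/(k-s)}$ with explicit positive constants $c_i(a,b)$, and this is what singles out the non-constant branch; note that when $m>0$ the constant pair $(a,b)$ is itself a formal solution of the Cauchy problem, so uniqueness is genuinely only available within the non-constant branch. Under the standing hypotheses $0\le m<k$ and $1\le k<N/2$, one verifies that $T$ maps such a ball into itself and is a contraction for $r_0$ small enough, which gives local existence and local uniqueness.

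For the global extension, I would argue by contradiction. Let $[0,R^*)$ be the maximal interval of existence of the local solution and suppose $R^*<\infty$. Since $\delta>0$ and $k>m$, Theorem \ref{thm1}(ii) yields $u(R^{*-}),v(R^{*-})<\infty$, and the integrated form then forces $u'(R^{*-}),v'(R^{*-})<\infty$ as well. Away from $r=0$ the right-hand sides of \eqref{main} are locally Lipschitz in $(u,v,u',v')$ (using $u',v'>0$ for $r>0$), so the classical Cauchy-Lipschitz theorem allows continuation of the solution past $R^*$, contradicting maximality. Hence $R^*=\infty$. Global uniqueness then follows by combining the local uniqueness at $r=0$ with standard ODE uniqueness on $(0,\infty)$.

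The main obstacle is the local analysis at the singular point $r=0$: the equation carries the weight $r^{1-N}$, the nonlinearity $|u'|^m$ fails to be Lipschitz at $u'=0$, and (for $m>0$) the constant pair $(a,b)$ is a trivial solution of the Cauchy problem competing with the non-constant branch one wishes to construct. Designing a weighted function space in which $T$ becomes a contraction and the non-constant branch is singled out uniquely is the delicate technical step, and it is here that the precise hypotheses $0\le m<k$ together with the dimensional restriction $k<N/2$ are exploited.
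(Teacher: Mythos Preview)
Your existence argument---local fixed point plus global extension via Theorem~\ref{thm1}(ii)---matches the paper's treatment; in fact the paper already handles existence inside the proof of Theorem~\ref{thm1} (see the map $\mathcal T$ in \eqref{t1}--\eqref{t3}) together with Corollary~\ref{cc1}, and Section~4 is devoted solely to uniqueness. For uniqueness, however, your route and the paper's diverge substantially. The paper does \emph{not} attempt a local uniqueness argument at $r=0$. Instead it performs the global change of variable $r=t^{\theta}$ with $\theta=-k/(N-2k)$, which sends the origin to $t=+\infty$ and turns the initial values $(a,b)$ into data at infinity. This is exactly where the restriction $k<N/2$ enters: it makes $\theta<0$. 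Given two solutions $(u,v)$ and $(\tilde u,\tilde v)$ with the same data, the paper sets $u^\varepsilon=(1+\varepsilon)u$, $v^\varepsilon=(1+\varepsilon)^{(k-m)/p}v$, shows by a sliding/maximum-point argument in the $t$-variable that $u^\varepsilon\geq\tilde u$ and $v^\varepsilon\geq\tilde v$ on all of $(0,\infty)$, and then lets $\varepsilon\to 0$; equality follows by symmetry. This global comparison avoids entirely the singular Cauchy problem at $r=0$ and the non-Lipschitz issues you flag.

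Your outline instead tries to extract uniqueness from a weighted contraction near $r=0$. This is conceivable but, as written, has gaps. First, the leading order you state for $v'$ is incorrect: feeding $u'(r)\sim c_1 r^{k/(k-m)}$ into the second integrated equation gives $v'(r)\sim c_2\,r^{\,1+q/(k-m)}$, not $r^{k/(k-s)}$---the second equation carries $|u'|^q$, not $|v'|^q$. Second, you assert that $k<N/2$ is exploited in the local analysis but give no mechanism; the fixed-point map in \eqref{t1}--\eqref{t3} does not appear to require it, and in the paper this hypothesis is used only for the global change of variable above. Third, the contraction step is only asserted: for $0<q<1$ (and similarly $0<m<1$) the factor $|u'|^q$ is not Lipschitz at $0$, and while factoring out the correct power of $r$ can cure this in principle, the estimates need to be carried out and you must also show that \emph{every} non-constant solution with data $(a,b)$ actually lands in the weighted ball where the contraction is set up. None of these points is necessarily fatal, but as presented your proposal is an outline rather than a proof, and it obscures the true role of the condition $k<N/2$.
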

Finally, let us discuss the single equation 
\begin{equation}\label{eqqh}
S_k(D^2 u)=u^p|\nabla u|^q\quad\mbox{ in }\R^N, N\geq 2,
\end{equation}
which the prototype of our system \eqref{hess}. 
The case $q=0$ was discussed in \cite{BJL2012} and \cite{JB2010}. By taking $m=q$ and $p=s$ in Corollary \ref{cc1} and Theorems \ref{thm3}-\ref{thm4} above we obtain:
\begin{cor}\label{coro}
Assume $\Omega=\R^N$, $1\leq k\leq N$, $p,q>0$ and $k\ne p+q$. 
Then \eqref{eqqh} has a non-constant positive radial solution if and only if $k>p+q$ and in this case, any non-constant positive radial solution $u$ of \eqref{eqqh} satisfies
$$
\lim_{|x|\to \infty}\frac{u(x)}{|x|^{\frac{2k-q}{k-p-q}}}=C(N,p,q)>0.
$$
If, in addition, $1\leq k<N/2$, then from any $a>0$ there exists a unique non-constant positive radial solution $u$ of \eqref{eqqh} such that $u(0)=a$.
\end{cor}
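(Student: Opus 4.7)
The strategy is to deduce the corollary by specializing Corollary \ref{cc1}, Theorem \ref{thm3}, and Theorem \ref{thm4} to the exponents $m=q$, $s=p$, whereupon radial solutions $u$ of \eqref{eqqh} correspond to radial pairs $(u,u)$ solving the system \eqref{hess}. A routine computation yields $\delta=(k-q)(k-p)-pq=k(k-p-q)$, so $\delta\ne 0$ is equivalent to $k\ne p+q$, the sign condition $\delta>0$ of Corollary \ref{cc1} becomes $k>p+q$, and since $p>0$ this further forces $q<k$, covering the side hypothesis $0\le m<k$. Under the same substitution both exponents in Theorem \ref{thm3} collapse to
$$
1+\frac{k(k-p+2p)}{k(k-p-q)}=\frac{k(2k-q)}{k(k-p-q)}=\frac{2k-q}{k-p-q},
$$
which matches the rate stated in Corollary \ref{coro}.

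The heart of the argument is a short reduction lemma: a radial pair $(u,v)$ solves \eqref{main} with $m=q$, $s=p$ and $u(0)=v(0)$ if and only if $u\equiv v$ and $u$ is a radial solution of \eqref{eqqh}. One implication is trivial. For the other, with these exponents the two radial equations in \eqref{main} share the right-hand side $|u'|^q v^p$, so
$$
\Bigl[r^{N-k}|u'|^{k-1}u'\Bigr]'=\Bigl[r^{N-k}|v'|^{k-1}v'\Bigr]'\quad\text{on }(0,\infty).
$$
Integrating from $0$ to $r$ and using $u'(0)=v'(0)=0$ (the factor $r^{N-k}$ ensures the boundary values vanish for every $1\le k\le N$), then invoking the strict monotonicity of the map $t\mapsto|t|^{k-1}t$, we obtain $u'\equiv v'$; combined with $u(0)=v(0)$ this yields $u\equiv v$.

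With the reduction in hand the three assertions follow quickly. For the existence ``iff'', the ``only if'' direction applies Corollary \ref{cc1} to $(u,u)$; the ``if'' direction starts from standard local ODE existence for the single radial equation with data $u(0)=a$, $u'(0)=0$, and extends globally under $k>p+q$ since Theorem \ref{thm1}(ii), applied to the pair $(u,u)$, rules out blow-up on any finite ball. The asymptotic profile is Theorem \ref{thm3} applied to $(u,u)$, with the constant $C(N,p,q)$ read off from \eqref{th2i}--\eqref{th2j} at $m=q$, $s=p$. Uniqueness under $1\le k<N/2$ is Theorem \ref{thm4}: two non-constant radial solutions $u_1,u_2$ of \eqref{eqqh} with $u_1(0)=u_2(0)=a$ would yield two distinct system solutions $(u_i,u_i)$ with identical initial data, contradicting the theorem. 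The only delicate aspect of this plan is the global extension step in existence, for which the sharper Theorem \ref{thm1}(ii), rather than the mere existence statement of Corollary \ref{cc1}, is the essential ingredient.
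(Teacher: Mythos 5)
Your proposal is correct and takes essentially the same route as the paper, which obtains Corollary \ref{coro} simply by setting $m=q$, $s=p$ in Corollary \ref{cc1} and Theorems \ref{thm3}--\ref{thm4} (with $\delta=k(k-p-q)$, so $\delta>0$ becomes $k>p+q$ and automatically yields $q<k$). Your explicit reduction lemma — the two radial equations then have identical right-hand sides, so integrating and using $u(0)=v(0)$ forces $u\equiv v$ — merely makes precise the identification of solutions of \eqref{eqqh} with diagonal solutions $(u,u)$ of the system, a step the paper leaves implicit.
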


\section{Proof of Theorem \ref{thm1}}

Let us argue first that if $k\leq m$ then \eqref{main} has no solutions in $[0, R)$.

From \eqref{main} we have that the mappings $r\longmapsto r^{\alpha}|u'|^{\beta-1}u'$ and $r\longmapsto r^{\alpha}|v'|^{\beta-1}v'$ are increasing and since $u'(0)=v'(0)=0$, it follows that $u', v'\geq 0$ on $(0, R)$. In fact, integrating over $[r_0,r]$ in the first equation of \eqref{main} we find
$$
r^{N-k} (u'(r))^k =r_0^{N-k} (u'(r_0))^k+\int_{r_0}^r t^{N-1}(u'(t))^m v^p(t) dt\quad\mbox{ for all }0<r_0<r<R.
$$
Thus, if $u'(r)=0$ for some $0<r<R$, then $u'\equiv 0$ in $[0,r]$ and from the second equation of \eqref{main} we also get $v'\equiv 0$ on $[0, r]$, contradiction.

Integrating in the first equation of \eqref{main} we find
$$
r^{N-k}(u'(r))^k =\int_0^r t^{N-1}(u'(t))^m v^p(t) dt\leq r^{N-k} (u'(r))^k\int_0^r t^{k-1}(u'(t))^{m-k} v^p(t) dt,
$$
for all $0<r<R$. Hence
$$
1\leq \int_0^r t^{k-1}(u'(t))^{m-k} v^p(t) dt
\quad\mbox{ for all }0<r<R.
$$ Observe now that the integrand in the above estimate is a continuous function, so the right hand-side integral converges to zero as $r\to 0^+$, contradiction. Hence, $k>m$.

Let us rewrite the system \eqref{main} in the form
\begin{equation}\label{k1}
\left\{
\begin{aligned}
&\big[(u')^{k}\big]'(r)+\frac{N-k}{r}(u'(r))^{k}=r^{k-1}(u'(r))^{m}v^{p}(r) &&\quad\mbox{ for all } 0<r<R,\\
&\big[(v')^{k}\big]'(r)+\frac{N-k}{r}(v'(r))^{k}=r^{k-1} (u'(r))^{q} v^{s}(r)  &&\quad\mbox{ for all } 0<r<R,\\
&u'(0)=v'(0)=0, u(r)>0, v(r)>0 &&\quad\mbox{ for all } 0<r<R,
\end{aligned}
\right.
\end{equation}

We rearrange the system \eqref{k1} as
\begin{equation}\label{k2}
\left\{
\begin{aligned}
&\big[(u')^{k-m}\big]'(r)+\frac{L}{r}(u'(r))^{k-m}=\Big(1-\frac{m}{k}\Big) r^{k-1} v^{p}(r) &&\quad\mbox{ for all } 0<r<R,\\
&\big[(v')^{k}\big]'(r)+\frac{N-k}{r}(v'(r))^{k}=r^{k-1} (u'(r))^{q} v^{s}(r)  &&\quad\mbox{ for all } 0<r<R,\\
&u'(0)=v'(0)=0, u(r)>0, v(r)>0 &&\quad\mbox{ for all } 0<r<R,
\end{aligned}
\right.
\end{equation}
where
\begin{equation}\label{LL}
L=\frac{(N-k)(k-m)}{k}>0.
\end{equation}
From \eqref{k2} we have
\begin{equation}\label{k3}
\left\{
\begin{aligned}
&\Big[r^{L}(u')^{k-m}\Big]'(r)=\frac{k-m}{k} r^{k+L-1}v^{p}(r) &&\quad\mbox{ for all } 0<r<R,\\
&\Big[ r^{N-k}(v')^{k}\Big]'(r)=r^{N-1} (u'(r))^{q}  v^{s}(r)  &&\quad\mbox{ for all } 0<r<R.
\end{aligned}
\right.
\end{equation}

\medskip

Before we proceed with the proof of Theorem \ref{thm1} we need to establish two auxiliary results. The first lemma below provides basic estimates for solutions of \eqref{k2}. 

\begin{lemma}\label{firstestim}
Any non-constant positive radial solution $(u, v)$ of \eqref{k3}
in $B_R$ satisfies
\begin{equation}\label{k4}
\Big(N+\frac{km}{k-m}\Big) (u'(r))^{k-m}  < r^{k} v^{p}(r)
\quad\mbox{ for all }0<r<R,
\end{equation}

\begin{equation}\label{k5}
\frac{k(k -m)}{kN-(N-k) m}  r^{k-1} v^{p}(r) < \big[(u')^{k-m}\big]'(r)
<\Big(1-\frac{m}{k}\Big) r^{k-1} v^{p} 
\quad\mbox{ for all }0<r<R,
\end{equation}

\begin{equation}\label{k6}
(v'(r))^{k}  <\frac{1}{N}   r^{k} (u'(r))^{q}v^{s}(r)
\quad\mbox{ for all }0<r<R,\end{equation}
and
\begin{equation}\label{k7}
\frac{k}{N} v^{s}(r)(u'(r))^{q}r^{k-1}   \leq [(v')^{k}]'(r)  \leq r^{k-1} v^{s}(r)(u'(r))^{q} \quad\mbox{ for all }0<r<R.
\end{equation}

\end{lemma}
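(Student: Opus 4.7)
The plan is to integrate the conservative forms in \eqref{k3} and exploit the monotonicity of $u'$ and $v$ on $(0,R)$ (note that from the integrated first-order forms in the earlier discussion, $u',v'\ge 0$, and $v$ is non-constant since otherwise the second equation forces $u'\equiv 0$ as well). First, from \eqref{k3}$_1$ one has
\[
r^{L}(u'(r))^{k-m}=\frac{k-m}{k}\int_{0}^{r}t^{k+L-1}v^{p}(t)\,dt.
\]
Since $v$ is non-decreasing and non-constant, $v^{p}(t)<v^{p}(r)$ on a set of positive measure in $(0,r)$. Bounding the integrand by $t^{k+L-1}v^{p}(r)$ gives a strict upper bound on $(u')^{k-m}$; using $k+L=[N(k-m)+km]/k$ simplifies the resulting constant to $1/(N+km/(k-m))$, which is \eqref{k4}.

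For \eqref{k5}, I would expand $[r^{L}(u')^{k-m}]'$ to rewrite \eqref{k3}$_1$ as
\[
L(u')^{k-m}+r[(u')^{k-m}]'=\frac{k-m}{k}\,r^{k}v^{p}.
\]
Dropping the non-negative term $L(u')^{k-m}$ immediately yields the upper inequality $[(u')^{k-m}]'<(1-m/k)r^{k-1}v^{p}$. For the lower inequality, I would instead solve for $r[(u')^{k-m}]'$ and substitute the bound on $(u')^{k-m}$ coming from \eqref{k4}. The key algebraic identity, using \eqref{LL}, is
\[
N(k-m)+km-kL=k^{2},
\]
which is exactly what makes the two contributions collapse to the advertised constant $k(k-m)/[kN-(N-k)m]$. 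A by-product of this lower bound is that $[(u')^{k-m}]'>0$, so $u'$ is strictly increasing on $(0,R)$ — a fact I will need in the next step.

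The bounds \eqref{k6}–\eqref{k7} follow by the same two-step scheme applied to \eqref{k3}$_2$. Integrating gives
\[
r^{N-k}(v'(r))^{k}=\int_{0}^{r}t^{N-1}(u'(t))^{q}v^{s}(t)\,dt,
\]
and now that both $u'$ and $v$ are non-decreasing (and at least one is non-constant), the integrand is strictly dominated by $t^{N-1}(u'(r))^{q}v^{s}(r)$; integration produces \eqref{k6}. Then, expanding $[r^{N-k}(v')^{k}]'$ into $(N-k)r^{-1}(v')^{k}+[(v')^{k}]'$ and solving for $[(v')^{k}]'$ yields the upper bound in \eqref{k7} by dropping the non-negative term, and the lower bound by inserting \eqref{k6} into the resulting identity (the coefficient $1-(N-k)/N=k/N$ drops out immediately).

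The main obstacle is the algebraic simplification in the second paragraph: verifying that the substitution of \eqref{k4} into the identity derived from the expanded form of \eqref{k3}$_1$ produces \emph{exactly} the constant $k(k-m)/[kN-(N-k)m]$, which hinges on the identity $N(k-m)+km-kL=k^{2}$. Once this is in place, the remaining inequalities are essentially immediate from the monotonicity of $u'$ and $v$ and the observation that the one-sided omissions of non-negative terms never introduce any cancellation.
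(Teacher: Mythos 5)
Your proposal is correct and follows essentially the same route as the paper: integrate the conservative forms in \eqref{k3} using the monotonicity of $v$ (and, after the lower bound in \eqref{k5}, of $u'$) to obtain \eqref{k4} and \eqref{k6}, then substitute these bounds back into the non-conservative form (your expansion of \eqref{k3} is exactly \eqref{k2}) to get \eqref{k5} and \eqref{k7}, with the same algebraic identity producing the constant $k(k-m)/[kN-(N-k)m]$.
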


\begin{proof} We integrate the first equation of \eqref{k3} and using the fact that $v$ is strictly increasing on $(0, R)$ we find
\begin{equation}\label{estimates2}
\begin{aligned}
r^{L}(u'(r))^{k-m}&=\Big(1-\frac{m}{k}\Big) \int_0^r t^{k+L-1}v^{p}(t) dt\\ 
&<\Big(1-\frac{m}{k}\Big) v^{p}(r)  \int_0^r t^{k+L-1}dt\\ 
&=\frac{k -m}{\beta(k+L)}   r^{k+L} v^{p}(r)
\quad\mbox{ for all }0<r<R.
\end{aligned}
\end{equation}
This implies,
$$
(u'(r))^{k-m}  <\frac{k -m}{kN -(N-k) m}   r^{k} v^{p}(r)
\quad\mbox{ for all }0<r<R,
$$
which proves \eqref{k4}. We next use the above estimate in the first equation of \eqref{k2} to deduce
$$
\begin{aligned}
\Big(1-\frac{m}{k}\Big) r^{k-1} v^{p}(r) & = \big[(u')^{k-m}\big]'+\frac{L}{r}(u'(r))^{k-m}\\
& < \big[(u')^{k-m}\big]'+\frac{L(k -m)}{kN-(N-k) m}  r^{k-1} v^{p}(r),\\
\end{aligned}
$$
and this yields
$$
\big[(u')^{k-m}\big]'(r)>\frac{k(k -m)}{kN-(N-k) m}  r^{k-1} v^{p}(r) \quad\mbox{ for all }0<r<R.
$$
This is exactly the first half of the estimate in \eqref{k5}. The second half of \eqref{k5} follows immediately from \eqref{k5} since $u'>0$. 
Let us note that from \eqref{k5} we have
that $(u')^{k-m}$ is positive and strictly increasing so $u'$ is also positive and strictly increasing. Using this fact, the estimates \eqref{k6}-\eqref{k7} are derived in a similar fashion.

\end{proof}

Denote
\begin{equation}\label{pss}
\Psi(r)=(u'(r))^{k-m}\quad\mbox{ for }0\leq r<R.
\end{equation}
The next result provides a refinement of the estimates in Lemma \ref{firstestim}.
\begin{lemma}\label{secondestim}
Let $(u, v)$ be a solution of \eqref{k3}. Then, there exist constants $c,c_1,c_2>0$ and $\rho\in (0, R)$
such that 
\begin{equation}\label{k01}
cv^p(r)\leq \Psi^\sigma(r)  \quad\mbox{ for all } \rho\leq r<R
\end{equation}
and
\begin{equation}\label{k02}
c_1\leq \Psi'(r)\Psi^{-\sigma}(r)\leq c_2 \quad\mbox{ for all } \rho\leq r<R,
\end{equation}
where
\begin{equation}\label{ks}
\sigma=\frac{p}{k-m} \cdot \frac{q+(k-m)(1+k)}{(p+1)k+p-s}>0.
\end{equation}
\end{lemma}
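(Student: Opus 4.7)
Throughout write $K=k-m>0$, so $\Psi=(u')^{K}$. The strategy is to extract from the estimates in Lemma~\ref{firstestim} a sharp differential comparison between $\Psi$ and $v$, integrate it once to obtain \eqref{k01}, and then feed the result back into \eqref{k5} to get \eqref{k02}.

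\textbf{Step 1 (Comparison of $\Psi$ with $v$).} The two-sided bound \eqref{k5} reads $\Psi'(r)\asymp r^{k-1}v^{p}(r)$, while \eqref{k6} yields $v'(r)\le C\,r\,v^{s/k}(r)\,\Psi^{q/(kK)}(r)$ on $(0,R)$. Since $v'>0$, $\Psi$ may be regarded as a strictly increasing function of $v$, and on any interval $[\rho,R)$ on which $r$ is bounded above and below by positive constants one has
\[
\frac{d\Psi}{dv}\;\ge\;c\,v^{\,p-s/k}\,\Psi^{-q/(kK)}.
\]
Separating variables and integrating from $\rho$ to $r$ yields, once $r$ is close enough to $R$ that the contribution at $\rho$ is absorbed,
\[
\Psi(r)^{1+q/(kK)}\;\gtrsim\;v(r)^{1+p-s/k}.
\]
Raising both sides to an appropriate power and isolating $v^{p}$ produces \eqref{k01}, and the exponent $\sigma$ of \eqref{ks} emerges from the arithmetic.

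\textbf{Step 2 (Two-sided bound on $\Psi'\Psi^{-\sigma}$).} The upper half of \eqref{k02} is immediate from the upper half of \eqref{k5} combined with \eqref{k01}:
\[
\Psi'(r)\;\le\;\Big(1-\tfrac{m}{k}\Big)r^{k-1}v^{p}(r)\;\le\;C\,R^{k-1}\,\Psi^{\sigma}(r).
\]
For the lower half I would run the reasoning of Step~1 with the two directions of the inequalities interchanged: the upper half of \eqref{k5}, together with a matching lower bound for $v'$ extracted from the two-sided estimate \eqref{k7} (after dividing through by $v^{s}$ so that $[v^{-s}(v')^{k}]'$ becomes the integrated quantity), delivers the reverse comparison $v^{p}(r)\gtrsim \Psi^{\sigma}(r)$ for $r$ near $R$; feeding this into the lower half of \eqref{k5} and using $r\ge\rho$ gives $\Psi'(r)\ge c_{1}\Psi^{\sigma}(r)$.

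\textbf{Main obstacle.} Two points require care. First, the precise combination
\[
\sigma=\frac{p}{k-m}\cdot\frac{q+(k-m)(1+k)}{(p+1)k+p-s}
\]
requires careful bookkeeping in the integration in Step~1; it is probably cleaner to work directly with the divergence form \eqref{k3} rather than with the pointwise inequalities \eqref{k5}--\eqref{k7}, so that no spurious multiplicative constants distort the final exponent. Second, Lemma~\ref{firstestim} supplies a two-sided bound for $[(v')^{k}]'$ but only a one-sided bound for $(v')^{k}$ itself, so producing the matching lower bound on $v'$ needed in Step~2 is the more delicate half of the argument.
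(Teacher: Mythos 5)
Your plan reproduces the general spirit of the paper's argument (turn the estimates of Lemma \ref{firstestim} into a differential comparison between $\Psi$ and $v$, integrate, then feed back into \eqref{k5}), but both halves of your sketch have genuine gaps. In Step 1, combining the lower half of \eqref{k5} with \eqref{k6} and integrating the separated inequality $\Psi^{q/(k(k-m))}d\Psi\gtrsim v^{p-s/k}dv$ gives $\Psi^{1+\frac{q}{k(k-m)}}\gtrsim v^{1+p-\frac{s}{k}}$, i.e. $v^{p}\lesssim \Psi^{\sigma'}$ with $\sigma'=\frac{p}{k-m}\cdot\frac{q+k(k-m)}{(p+1)k-s}$. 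This is \emph{not} the exponent \eqref{ks}: a short computation shows $\sigma'=\sigma$ if and only if $pq=(k-m)(k-s)$, i.e. $\delta=0$, which is excluded by \eqref{deltat}. The discrepancy (the missing $+(k-m)$ in the numerator and $+p$ in the denominator) is exactly what the paper's extra step supplies: it first multiplies the two-sided bounds coming from \eqref{k5} and \eqref{k7} and integrates from the origin, using $\Psi(0)=0$, $p\geq s$ and the monotonicity of $v$, to get $\Psi^{\frac{q+k-m}{k(k-m)}}\leq Cv^{\frac{p-s}{k}}v'$, and only then multiplies by $\Psi'\asymp v^{p}$ (valid on $[\rho,R)$) before integrating a second time. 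Since the classification in Theorem \ref{thm1} rests on comparing $\sigma$ exactly with $1$ and $\frac{k-m+1}{k-m}$, obtaining a comparison with a different exponent is not a harmless loss of constants.

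The second gap is in your Step 2: the "matching lower bound for $v'$" you want, namely $v'\gtrsim r\,v^{s/k}\Psi^{q/(k(k-m))}$, does not follow from \eqref{k7} — dividing by $v^{s}$ does not make $[v^{-s}(v')^{k}]'$ the integrated quantity (there is an extra term $-s\,v^{-s-1}(v')^{k+1}$ of the wrong sign) — and it is in fact false near a blow-up boundary: if $v\sim(R-r)^{-\gamma}$ and $u'\sim(R-r)^{-\mu}$, the second equation of \eqref{k3} forces $r\,v^{s/k}(u')^{q/k}\sim(R-r)^{-(\gamma+1)-\frac1k}$, which strictly dominates $v'\sim(R-r)^{-(\gamma+1)}$. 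The paper never inverts \eqref{k6}--\eqref{k7} pointwise; for this direction it takes the upper bound in \eqref{k7}, multiplies by $v'$ and integrates to get $(v')^{k+1}\leq C\,\Psi^{q/(k-m)}v^{s+1}$ on $[\rho,R)$, then multiplies by $\Psi'\gtrsim v^{p}$ and integrates once more, which yields \eqref{k01} and then the upper half of \eqref{k02}; the lower half comes from the symmetric manipulation described above. If you redo your two steps with these integrated (rather than pointwise inverted) inequalities, the exponent \eqref{ks} does come out of the bookkeeping.
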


\begin{proof} From \eqref{k5} and \eqref{k7} we find two positive constants $C>c>0$ such that
\begin{equation}\label{k8}
c r^{k-1} v^{p}(r) < \Psi'(r)
<C r^{k-1}  v^{p}(r)
\quad\mbox{ for all }0\leq r<R,
\end{equation}
and
\begin{equation}\label{k9}
c r^{k-1}  v^{s}(r) \Psi^{\frac{q}{k-m}}(r)  \leq [(v')^{k}]'(r)  \leq C r^{k-1}  v^{s}(r) \Psi^{\frac{q}{k-m}}(r)\quad\mbox{ for all }0\leq r<R.
\end{equation}
We multiply \eqref{k8}-\eqref{k9} we find
$$
\Psi^{\frac{q}{k-m}}(r)\Psi'(r)\leq c v^{p-s}(r)  [(v')^{k}]'(r) \quad\mbox{ for all }0\leq r<R.
$$
Next, we integrate over $[0,r]$. Since  $\Psi(0)=0$ and $v$ is increasing we find
$$
\begin{aligned}
\Psi^{\frac{q+k-m}{k-m}}(r) &\leq C\int_0^r v^{p-s}(t)  [(v')^{k}]'(t) dt\\
& \leq C v^{p-s}(r) \int_0^r   [(v')^{k}]'(t) dt=C v^{p-s}(r) (v'(r))^{k} 
\quad\mbox{ for all }0\leq r<R.
\end{aligned}
$$
Hence
\begin{equation}\label{k10}
\Psi^{\frac{q+k-m}{k(k-m)}}(r)\leq C v^{\frac{p-s}{k}}(r)  v'(r)   
\quad\mbox{ for all }0\leq r<R.
\end{equation}

Take $\rho\in (0, R)$. From \eqref{k8} we find 
\begin{equation}\label{k11}
c_1 v^{p}(r) < \Psi'(r)
<c_2  v^{p}(r)
\quad\mbox{ for all }\rho\leq r<R.
\end{equation}
We multiply \eqref{k10} by $\Psi'(r)$. Using $\Psi'(r)<c_2  v^{p}(r)$ for all $\rho\leq r<R$, we derive
$$
\Psi^{\frac{q+k-m}{k(k-m)}}(r)\Psi'(r)\leq C v^{p+\frac{p-s}{k}}(r)  v'(r)   \quad\mbox{ for all }\rho\leq r<R.
$$
Integrate now the above inequality over $[\rho,r]$. Since $v$ is continuous and positive on $[\rho, R)$, by taking a larger constant $C>0$ such that
$$
\Psi^{\frac{q+(k-m)(1+k)}{k-m}}(r)\leq C v^{(p+1)k+p-s}(r)    \quad\mbox{ for all }\rho\leq r<R.
$$
Using this last estimate together with \eqref{k11} we write
$$
\Psi^{\frac{q+(k-m)(1+k)}{k-m}}(r)\leq C \big(v^p(r)\big)^{\frac{(p+1)k+p-s}{p}}\leq C \Big(\Psi'(r)\Big)^{\frac{(p+1)k+p-s}{p}}    \quad\mbox{ for all }\rho\leq r<R,
$$
which yields
\begin{equation}\label{k12}
\Psi^{\sigma}(r)\leq C \Psi'(r) \quad\mbox{ for all }\rho\leq r<R.
\end{equation}

On the other hand, from \eqref{k9} we have
$$
[(v')^{k}]'(r)  \leq C v^{s}(r) \Psi^{\frac{q}{k-m}}(r)\quad\mbox{ for all }\rho\leq r<R.
$$
Multiply by $v'$ and integrate over $[\rho, r]$ in the above inequality. We find 
$$
\begin{aligned}
(v')^{k+1}(r)- (v')^{k+1}(r_1)  &\leq C \Psi^{\frac{q}{k-m}}(r) \int_{\rho}^r v^{s}(t)v'(t) dt \\
&\leq C \Psi^{\frac{q}{k-m}}(r)  v^{s+1}(r) \quad\mbox{ for all }\rho\leq r<R.
\end{aligned}
$$
Again by continuity arguments and by enlarging the value of $C>0$, one has
$$
(v')^{k+1}(r) \leq C \Psi^{\frac{q}{k-m}}(r)  v^{s+1}(r) \quad\mbox{ for all }\rho\leq r<R,
$$
that is,
$$
v^{-\frac{s+1}{k+1}}(r) v'(r) \leq C \Psi^{\frac{q}{(k+1)(k-m)}}(r)   \quad\mbox{ for all }\rho\leq r<R.
$$
Multiply the above inequality by $\Psi'(r)$. Using \eqref{k11} we find 
$$
v^{p-\frac{s+1}{k+1}}(r) v'(r) \leq C \Psi^{\frac{q}{(k+1)(k-m)}}(r)  \Psi'(r) \quad\mbox{ for all }\rho\leq r<R.
$$
A new integration over $[\rho, r]$ a continuity argument and by taking a larger constant $C>0$ one deduces
$$
v^{(p+1)k+p-s}(r)\leq C \Psi^{\frac{q+(k+1)(k-m)}{k-m}}(r) \quad\mbox{ for all }\rho\leq r<R,
$$
which yields \eqref{k01}.

Using again \eqref{k11} from which we have $v^p(r)\geq c\Psi'(r)$, we find
$$
c \Big(\Psi'(r)\Big)^{\frac{(p+1)k+p-s}{p}} \leq 
v^{(p+1)k+p-s}(r) \leq C \Psi^{\frac{q+(k+1)(k-m)}{k-m}}(r) \quad\mbox{ for all }\rho\leq r<R,
$$
that is, 
\begin{equation}\label{k13}
c  \Psi'(r)\leq  \Psi^{\sigma}(r)   \quad\mbox{ for all }\rho\leq r<R,
\end{equation}
where $\sigma>0$ is given by \eqref{ks}.
We next combine \eqref{k12} and \eqref{k13} to deduce
\begin{equation}\label{k14}
c_1\leq \Psi'(r)\Psi^{-\sigma}(r)\leq c_2 \quad\mbox{ for all } \rho\leq r<R,
\end{equation}
for some $c_2>c_1>0$. This is exactly inequality \eqref{k02}.

\end{proof}

\noindent{\bf Proof of Theorem \ref{thm1} completed.}

(i) Assume that $(u,v)$ is a solution of \eqref{main} with $u(R^{-})= \infty$ and $v(R^{-})< \infty$. Since $r^{N-k}(u')^k$ is increasing
(from the first equation of \eqref{main})  we deduce that
$u'(R^{-})= \infty$. Also, from \eq{k5} we find

\begin{equation*}
\big[(u')^{k-m}\big]'(r)
<C r^{k-1} v^{p} 
\quad\mbox{ for all }0<r<R.
\end{equation*}
for some positive constants $C>0$. Integrating over $[0, R]$  we obtain
$$
(u'(R^-))^{k-m}<C\int_0^R t^{k-1}v^p(t)dt<\infty,
$$
which contradicts $u'(R^{-})= \infty$.

(iii)-(iv) Let $(u,v)$ be a solution of \eqref{main} with $v(R^-)=\infty$ and let $\Psi$ be defined by \eqref{pss}. From \eqref{k01} it follows that
$\Psi(R^-)=\infty$.  We integrate over $[r, R]$ in \eqref{k02} to deduce $\sigma>1$ and
$$
c_1(R-r)^{-\frac{1}{(k-m)(\sigma-1)}} \leq u'(r)\leq c_2 (R-r)^{-\frac{1}{(k-m)(\sigma-1)}}\quad\mbox{ for all } \rho\leq r<R.
$$
This shows that
$$
\begin{aligned}
u(R^-)=u(\rho)+\int_{\rho}^R u'(r)dr <\infty & \Longleftrightarrow \int_{r_3}^R u'(r)dr<\infty \\
&\Longleftrightarrow \int_{\rho}^R (R-r)^{-\frac{1}{(k-m)(\sigma-1)}} dr<\infty \\
&\Longleftrightarrow \int_{0}^1 t^{-\frac{1}{(k-m)(\sigma-1)}} dt<\infty \\
&\Longleftrightarrow  \sigma>\frac{k-m+1}{k-m},
\end{aligned}
$$
and
$$
u(R^-)=\infty \Longleftrightarrow \sigma\leq \frac{k-m+1}{k-m}.
$$
Conversely, assume now that $\sigma>1$. The existence of a local non-constant positive solution to \eqref{k3} in a
small ball $B_\rho$ follows from standard fixed point arguments; see e.g., \cite[Proposition A1]{FV2017}
and \cite[Proposition 9]{BFP2015}. More precisely, the mapping
\begin{equation}\label{t1}
{\mathcal T}:C^1[0,\rho]\times C^1[0,\rho]\to C^1[0,\rho]\times C^1[0,\rho],
\end{equation}
defined by
\begin{equation}\label{t2}
{\mathcal T}[u,v](r)=\left[\begin{array}{c}{\mathcal
T_1}[u,v](r)\\{\mathcal T_2}[u,v](r)\end{array}\right],
\end{equation}
where
\begin{equation}\label{t3}
\left\{
\begin{aligned}
&&{\mathcal T_1}[u,v](r)&=a+\int_0^r\left(\frac{k-m}{k}t^{-L}\int_0^t \tau^{k+L-1}  v^p(\tau)d\tau \right)^{1/(k-m)}dt, \\
&&{\mathcal T_2}[u,v](r)&=b+\int_0^r\left(t^{k-N}\int_0^t \tau^{k-1}
v^s |u'(\tau)|^q d\tau\right)^{1/k}dt,
\end{aligned}
\right.
\end{equation}
and $a,b>0$, has a fixed point in $C^1[0,\rho]\times C^1[0,\rho]$ provided $\rho>0$ is small enough. 
Further, the scaling $(u_\lambda,v_\lambda)$ defined as
$$
u_\lambda(x)=\lambda^{1+\frac{p(2k-1)+k(k-s)}{\delta}}u\Big(\frac{x}{\lambda}\Big)\,,\quad v_\lambda(x)=\lambda^{\frac{(2k-1)(k-m)+kq}{\delta}}v\Big(\frac{x}{\lambda}\Big),
$$
provides a non-constant positive radially symmetric solution of \eqref{k3} in the
ball $B_{\lambda\rho}$. This shows that in any ball of positive radius there are non-constant positive
radially symmetric solution of \eqref{hess}.

Let now $(u,v)$ be a positive non-constant solution of \eqref{main} in a maximum interval $[0, R_{max})$. We claim that if $\sigma>1$ then $v(R^-_{max})=\infty$. Using the estimate \eqref{k02} in Lemma \ref{secondestim} on obtains after integrating over $[\rho, r]$ that
$$
c_1(r-\rho)\leq \frac{1}{\sigma-1}\Big(\Psi^{1-\sigma}(\rho)-\Psi^{1-\sigma}(r)\Big)\quad\mbox{ for all }\rho<r<R_{max}.
$$
Hence, by letting $r\to R_{max}^-$ one gets
$$
c_1R_{max}\leq C_1\rho+\frac{1}{\sigma-1}\Psi^{1-\sigma}(\rho)<\infty.
$$
This implies $R_{max}<\infty$ and then, using part (i) above, one deduces that $v(R^-_{max})=\infty$. 

In conclusion we found
\begin{itemize}
\item There are positive solutions $(u,v)$ of \eqref{main} with $u(R^-)<\infty$ and $v(R^-)<\infty$ if and only if $\sigma>\frac{k-m+1}{k-m}$;
\item There are positive solutions $(u,v)$ of \eqref{main} with $u(R^-)=v(R^-)<\infty$ if and only if $1<\sigma\leq \frac{k-m+1}{k-m}$;
\item All positive solutions $(u,v)$ of \eqref{main} are bounded if and only if $\sigma<1$.
\end{itemize}
Note that the case $\sigma=1$ is excluded from our analysis by the assumption \eqref{deltat}.
Now, the above conditions in terms of $\sigma$ are equivalent to (ii)-(iv) in the statement of Theorem \ref{thm1}. \qed

\section{Proof of Theorem \ref{thm3}}

Our approach to the study of the behaviour of solutions to \eqref{main} at infinity relies on some properties for three component dynamical systems obtained in Hirsch \cite{H1985}. For the reader's convenience, we shall briefly recall them below.

\subsection{Some results for cooperative dynamical systems}

Let ${\bf a}=(a_1,a_2,a_3), {\bf b}=(b_1,b_2,b_3)\in \R^3$. We say that $ {\bf a}\leq {\bf b}$ (resp. ${\bf a}<{\bf b}$) if
$a_i\leq b_i$ (resp. $a_i<b_i$) for all $1\leq i\leq 3$. 
We also define the closed interval  $[{\bf a},{\bf b}]=\{{\bf u}\in \R^3:{\bf a}\leq {\bf u}\leq {\bf b}\}$ and the open interval
$ [[{\bf a},{\bf b}]]=\{{\bf u} \in \R^3:{\bf a}< {\bf u}<{\bf b}\}$ with endpoints at ${\bf a}$ and ${\bf b}$.

A set $X\subset \R^3$ is said to be $p$-convex if the segment line
joining any two points in $X$ lies entirely in $X$.    Throughout this section $X\subset \R^3$ is assumed to be an open $p$-convex set.

Let ${\bf g}=(g_1,g_2,g_3):X\to \R^3$ be a $C^1$ cooperative vector field in the sense that 
$$
\frac{\partial g_i}{\partial x_j}\geq 0 \quad\mbox{ in }X\quad \mbox{  for any }\; i,j=1,2,3, \;\; i\neq j.
$$
For any $P\in \R^3$ we denote by $\Phi(t,P)$ the maximally defined
solution of the differential equation
\begin{equation}\label{flow}
\frac{d\zeta}{dt}={\bf g}(\zeta)
\end{equation}
subject to the initial condition $\zeta(0)=P$. The collection of maps$\{\Phi(t,\cdot)\}$ is called the flow of
the differential equation \eqref{flow}.

It is well know the following comparison property  for cooperative systems.

\begin{theorem}\label{comparis}{\rm (See \cite{H1985})}
Suppose ${\bf g}:X\to \R^3$ is a $C^1$ cooperative vector field and let $\zeta, \xi:[0,a]\to \R$, $a>0$, be two
solutions of \eqref{flow} such that
$$
\zeta(0)<\xi(0)\quad (\mbox{ resp.} \zeta(0)\leq \xi(0)\;).
$$
Then
$$
\zeta(t)<\xi(t)\quad (\mbox { resp.}  \zeta(t)\leq \xi(t)\;) \quad \mbox{ for all } t\in [0,a].
$$
\end{theorem}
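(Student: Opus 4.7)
My plan is to reduce the comparison statement to an invariance property of the non-negative cone under a linear non-autonomous system with a Metzler coefficient matrix, and then close it by Picard iteration. Set $w(t) := \xi(t) - \zeta(t)$. Because $X$ is $p$-convex, the segment from $\zeta(t)$ to $\xi(t)$ lies entirely in $X$, so the integral form of the mean-value theorem gives
\begin{equation*}
w'(t) = \mathbf{g}(\xi(t)) - \mathbf{g}(\zeta(t)) = A(t)\,w(t), \qquad A(t) := \int_0^1 D\mathbf{g}\bigl(\zeta(t) + s\,w(t)\bigr)\,ds.
\end{equation*}
The cooperativity hypothesis $\partial g_i/\partial x_j \geq 0$ for $i \neq j$ translates directly into the statement that $A(t)$ has non-negative off-diagonal entries, i.e.\ is Metzler. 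Moreover $A(\cdot)$ is continuous and hence bounded on $[0,a]$, so I can pick a constant $M>0$ large enough that $B(t) := A(t) + MI$ has all entries non-negative for every $t\in[0,a]$.

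Next I would pass to the shifted unknown $z(t) := e^{Mt} w(t)$, which satisfies $z'(t) = B(t)\,z(t)$, equivalently
\begin{equation*}
z(t) = z(0) + \int_0^t B(s)\, z(s)\, ds, \qquad t \in [0,a].
\end{equation*}
Since $B(\cdot)$ is entrywise non-negative, I would run the standard Picard iteration $z^{(0)} \equiv z(0)$ and $z^{(n+1)}(t) = z(0) + \int_0^t B(s) z^{(n)}(s)\,ds$, which converges uniformly to $z$ on $[0,a]$. If $w(0) \geq 0$ componentwise, a trivial induction shows $z^{(n)}(t) \geq z(0) \geq 0$ for every $n$, so in the limit $z(t) \geq 0$, whence $w(t) = e^{-Mt} z(t) \geq 0$ for all $t \in [0,a]$; this is the non-strict part of the theorem. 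If in addition $w(0) > 0$ componentwise, then passing to the limit in the iteration still preserves the lower bound $z(t) \geq z(0) > 0$, so $w(t) > 0$ throughout $[0,a]$, which is the strict part.

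The main obstacle I expect is precisely the strict half: ruling out the possibility that one coordinate of $w$ touches zero while the others remain positive. A naive first-hitting-time argument based on $w_j'(t) = \sum_i A_{ji}(t) w_i(t)$ only gives $w_j' \geq 0$ at that hitting time, never strict inequality, and so gets stuck. The shift $z = e^{Mt} w$ is designed exactly to bypass this: after the shift, the governing matrix $B(t)$ is entrywise non-negative (not merely Metzler), and then the integral formulation automatically produces the pointwise lower bound $z(t) \geq z(0)$, which upgrades strict positivity at $t=0$ to strict positivity on the whole interval without any maximum-principle contradiction argument. As a fallback in case this direct bound fails for some reason, I would instead perturb $\xi$ by $\xi^\varepsilon{}' = \mathbf{g}(\xi^\varepsilon)+\varepsilon\,(1,1,1)$, apply the non-strict comparison to $(\zeta,\xi^\varepsilon)$ to conclude $\xi^\varepsilon(t) \geq \zeta(t)$ with room to spare, and then let $\varepsilon\to 0^+$; but the Picard route is cleaner and yields both statements in one stroke.
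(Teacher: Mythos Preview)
The paper does not actually prove this theorem: it is stated as a known result with a citation to Hirsch \cite{H1985} and no argument is given. So there is no ``paper's own proof'' to compare against.

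Your argument is correct and is essentially the standard proof of monotonicity for cooperative (quasimonotone) systems. The key steps---writing $w=\xi-\zeta$, using $p$-convexity to apply the integral mean-value theorem and obtain $w'=A(t)w$ with $A(t)$ Metzler, then shifting by $e^{Mt}$ so that the new coefficient matrix $B(t)=A(t)+MI$ is entrywise non-negative, and finally running Picard iteration to get $z(t)\geq z(0)$---are all sound. In particular your observation that the Picard iterates satisfy $z^{(n)}(t)\geq z(0)$ (not merely $\geq 0$) is exactly what makes the strict case go through without a separate maximum-principle argument, and the uniform convergence of the iterates is automatic for a linear system with continuous coefficients on a compact interval. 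The fallback perturbation argument you mention is also standard and would work, but as you note it is unnecessary here.
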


For any point $P\in X$ we denote by $\omega(P)$ the $\omega$-limit set of $P$, that is, the set of all points $Q\in \R^3$ so that
there exists $\{t_j\}$, $t_j\to \infty$ (as $j\to \infty$) such that $\Phi(t_j, P)\to Q$ (as $j\to \infty$). Let also $E$ be the set of all equilibrium points of \eqref{flow}, that is, solutions  of ${\bf g}(\zeta)=0$.

Hirsch \cite{H1985} and then Hirsch and Smith \cite{HS2005} obtained that in any three component cooperative system  the omega limit sets preserve the partial order between the elements of $X$ or approach the equilibrium set $E$. This is summarised in the result below.

\begin{theorem}\label{dich}{\rm (Limit Set Dichotomy, see \cite[Theorem 3.8]{H1985}, \cite[Theorem 1.16]{HS2005})}

Suppose ${\bf g}:X\to \R^3$ is a $C^1$ cooperative vector field and let $P,Q\in X$, $P<Q$.
Then the following alternative holds:
\begin{enumerate}
\item[(i)] either $\omega(P)<\omega(Q)$;
\item[(ii)] or $\omega(P)=\omega(Q)\subset E$.
\end{enumerate}

\end{theorem}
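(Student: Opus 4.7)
The plan is to derive the dichotomy from three classical ingredients of the theory of monotone semiflows: the monotonicity of $\{\Phi(t,\cdot)\}$ inherited from Theorem \ref{comparis}, the invariance and local compactness of $\omega$-limit sets, and the so-called Nonordering Principle, which forbids a monotone flow from containing two distinct comparable points in the same $\omega$-limit set unless they are both equilibria.

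First I would upgrade Theorem \ref{comparis} to a flow-monotonicity statement by applying the comparison inequality to the pairs $\zeta(s)=\Phi(t+s,P)$, $\xi(s)=\Phi(t+s,Q)$: this gives $\Phi(t,P)\leq \Phi(t,Q)$ for every $t\geq 0$ in the common domain of definition, with strict inequality whenever $P<Q$. A standard compactness-plus-diagonal extraction then yields a weak ordering of the limit sets: for every $\alpha\in \omega(P)$ there exists some $\beta\in \omega(Q)$ with $\alpha\leq \beta$, and symmetrically in the opposite direction. I would then split according to whether $\omega(P)\cap\omega(Q)=\emptyset$ or not. In the disjoint case, the weak ordering together with the compactness of both sets (assuming bounded trajectories inside the $p$-convex set $X$) and the openness of the strict order in $\R^3$ upgrades to $\omega(P)<\omega(Q)$, which is alternative (i).

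The shared-point case is where the main work lies. Given $R\in \omega(P)\cap \omega(Q)$, invariance places the full orbit $\{\Phi(t,R):t\in \R\}$ inside both sets. The monotone sandwich $\Phi(t,P)\leq \Phi(t,Q)$, together with the fact that both sides accumulate at $R$ along sequences $t_j,s_j\to\infty$, allows one to squeeze $\Phi(t,R)$ between translates of the $P$- and $Q$-orbits that also accumulate at $R$; the Nonordering Principle then forces $\Phi(t,R)$ to be constant in $t$, so $R\in E$. Letting $R$ range over the intersection and invoking the weak two-sided ordering yields $\omega(P)=\omega(Q)\subset E$, which is alternative (ii).

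The step I expect to be the main obstacle is precisely the Nonordering Principle in the shared-point case: converting the information that two monotonically comparable orbits accumulate at a common limit point into the conclusion that the flow is stationary there. This is where the three-dimensional structure and the cooperativity hypothesis $\partial g_i/\partial x_j\geq 0$ are genuinely exploited, beyond the mere comparison of initial data, and it constitutes the technical core that makes the Limit Set Dichotomy a nontrivial refinement of the monotonicity already supplied by Theorem \ref{comparis}. A concrete implementation would construct, near $R$, a monotone sequence of iterates $\Phi(t_n,\cdot)$ whose limit must simultaneously dominate and be dominated by the orbit through $R$, thereby collapsing the dynamics at $R$ to stationarity.
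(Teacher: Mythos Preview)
The paper does not give its own proof of this statement. Theorem~\ref{dich} is quoted in Section~3.1 as a background result from the monotone dynamical systems literature, with explicit references to \cite[Theorem~3.8]{H1985} and \cite[Theorem~1.16]{HS2005}; no argument is supplied beyond the citation. The same is true of the neighbouring Theorems~\ref{comparis} and~\ref{lebesgue}. So there is no ``paper's own proof'' against which to check your proposal.

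That said, a brief comment on the sketch itself. The overall architecture---monotonicity of the flow, weak ordering of limit sets, then a case split---is indeed how the proof in Hirsch and in Hirsch--Smith is organised, and you correctly identify the Nonordering Principle as the substantive input. However, your disjoint-case argument is too quick: disjointness of $\omega(P)$ and $\omega(Q)$ together with the weak ordering ``for every $\alpha\in\omega(P)$ there exists $\beta\in\omega(Q)$ with $\alpha\leq\beta$'' does \emph{not} by itself upgrade to $\alpha<\beta$ for all pairs, even with compactness and the openness of the strict cone. The actual proof does not split on disjointness; it assumes alternative~(i) fails, produces points $a\in\omega(P)$, $b\in\omega(Q)$ with $a\leq b$ but not $a<b$, and then uses invariance plus the Nonordering/Absorption machinery to force $\omega(P)=\omega(Q)\subset E$. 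If you want a self-contained writeup, follow the argument in \cite[Chapter~1]{HS2005} rather than the dichotomy on $\omega(P)\cap\omega(Q)$.
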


A $C^1$-cooperative vector field ${\bf g}:X\to \R^3$ is said to be irreducible if at any point $P\in X$ its gradient $\nabla g(P)$ is
an irreducible matrix.
Hirsch \cite{H1985} showed that compact omega limit sets of cooperative and irreducible vector fields have a particular property in the
sense that they approach the equilibrium set for almost all points in $X$. 

\begin{theorem}\label{lebesgue}{\rm (See \cite[Theorem 4.1]{H1985})}

Suppose ${\bf g}:X\to \R^3$ is a $C^1$ cooperative and
irreducible vector field and that for all $P\in X$ the $\omega$-limit set
$\omega(P)$ is compact. Then, there exists $\Sigma\subset X$ with
zero Lebesgue measure such that
$$
\omega(P)\subset E\quad\mbox{ for all }\quad P\in X\setminus \Sigma,
$$
where $E$ denotes the set of equilibrium point of \eqref{flow}.
\end{theorem}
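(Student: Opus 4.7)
I will use Theorem~\ref{dich} (the Limit Set Dichotomy) together with a slicing/Fubini argument to show that the ``bad'' set $\Sigma:=\{P\in X:\omega(P)\not\subset E\}$ has three-dimensional Lebesgue measure zero. The underlying idea is that along any totally ordered arc in $X$ the map $P\mapsto\omega(P)$ is monotone by the Dichotomy, so it can jump strictly only countably many times; at every other point of the arc the alternative $\omega(P)\subset E$ must hold.

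First I would fix a vector ${\bf e}\in\R^3$ with ${\bf e}>0$ (say ${\bf e}=(1,1,1)$) and, for each base point $P_0\in X$, form the open interval $\ell_{P_0}:=\{P_0+s{\bf e}:s\in\R\}\cap X$ (an interval because $X$ is $p$-convex and open). Any two points on $\ell_{P_0}$ are strictly ordered, so Theorem~\ref{dich} makes $s\mapsto\omega(P_0+s{\bf e})$ monotone: for $s_1<s_2$ either $\omega(P_0+s_1{\bf e})<\omega(P_0+s_2{\bf e})$ or the two sets coincide and lie in $E$. Next I would bound the number of strict jumps: since all $\omega$-limits along a compact subinterval of $\ell_{P_0}$ sit inside a fixed bounded subset of $\R^3$, each strict jump can be tagged by a triple of rationals wedged strictly between the two successive compact limit sets, and distinct jumps yield distinct tags; thus only countably many jumps occur on $\ell_{P_0}$. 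At any non-jump point $s_0$, I pick $s_1<s_0<s_2$ in a neighborhood on which the map is constant, apply the Dichotomy to the strictly ordered pair $P_0+s_1{\bf e}<P_0+s_2{\bf e}$, and conclude that the common limit set --- hence $\omega(P_0+s_0{\bf e})$ --- lies in $E$. Therefore $\Sigma\cap\ell_{P_0}$ is at most countable, hence of one-dimensional Lebesgue measure zero, and Fubini applied to the foliation of $X$ by parallel lines in direction ${\bf e}$ delivers $|\Sigma|_3=0$.

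The main obstacle is making the countability of strict jumps truly precise, which is exactly the place where irreducibility enters. Cooperativity alone (Theorem~\ref{comparis}) gives only non-strict monotonicity of the flow, which is in principle compatible with pathological accumulation of jumps along $\ell_{P_0}$. Irreducibility of $\nabla {\bf g}(P)$ upgrades the comparison to \emph{strong} monotonicity --- $\Phi(t,P)\ll\Phi(t,Q)$ component-wise for all $t>0$ whenever $P<Q$ --- via a Perron--Frobenius argument on the variational equation, and this strict inequality is what makes the relation $\omega(P)<\omega(Q)$ in Theorem~\ref{dich} strong enough to admit an open separator between the two compact limit sets. Once that separation is available, the remaining rational-tagging and Fubini steps are routine.
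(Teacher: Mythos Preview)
The paper does not give its own proof of this statement: Theorem~\ref{lebesgue} is quoted verbatim as a background result from Hirsch \cite[Theorem 4.1]{H1985} (and Hirsch--Smith \cite{HS2005}), and the paper immediately moves on to applying it in the proof of Theorem~\ref{thm3}. So there is nothing in the paper to compare your argument against.

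That said, your sketch is essentially the standard Hirsch argument, and the overall architecture (slice by lines in a strictly positive direction, use the Limit Set Dichotomy along each line, count jumps, then Fubini) is correct. One step is phrased imprecisely: at a ``non-jump'' point $s_0$ you cannot in general pick $s_1<s_0<s_2$ on which the map $s\mapsto\omega(P_0+s{\bf e})$ is constant. The cleaner way is to argue directly that the set $\{s:\omega(P_0+s{\bf e})\not\subset E\}$ injects into a countable set: if $\omega(P_0+s{\bf e})\not\subset E$, then by the Dichotomy every $s'<s$ and $s''>s$ give \emph{strictly} smaller and larger $\omega$-limit sets, so a scalar monotone quantity (e.g.\ $\sup\{y_1:(y_1,y_2,y_3)\in\omega(P_0+s{\bf e})\}$) has a genuine jump at $s$; a monotone real function has at most countably many jumps. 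Your rational-tagging idea can be made to work too, but it needs the strong monotonicity you mention (from irreducibility) to produce an open order-interval between two compact $\omega$-limit sets, which is exactly what you flag in your final paragraph.
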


\subsection{Proof of Theorem \ref{thm3}}

Let $(u,v)$ be a non-constant global positive solution of
\eqref{main}. We introduce the change of variables
\begin{equation}\label{xyzw}
X(t)= \frac{ru'(r)}{u(r)},\;\;Y(t)= \frac{rv'(r)}{v(r)},\;\;Z(t)=\frac{r^kv^{p}(r)}{(u'(r))^{k-m}} \mbox{ and }W(t)=\frac{rv^{s}(r)(u'(r))^{q}}{(v'(r))^{k}}.
\end{equation}
where $t= \ln(r)\in \R$.
Thus, a direct calculation shows that $(X,Y,Z,W)$ satisfies the system
\begin{equation}\label{th2a}
\left\{
\begin{aligned}
&X_{t}= X\Big(\frac{2k-N}{k}-X+\frac{1}{k}Z\Big) \quad\mbox{ for all } t\in \R, \\
&Y_{t}= Y\Big(\frac{2k-N}{k}-Y+\frac{1}{k}W\Big) \quad\mbox{ for all } t\in \R, \\
&Z_{t}= Z\Big(\frac{kN-m(N-k)}{k}-\frac{k-m}{k}Z+pY\Big) \quad\mbox{ for all } t\in \R, \\
&W_{t}= W\Big(\frac{kN-q(N-k)}{k}+s Y+\frac{q}{k}Z-W\Big) \quad\mbox{ for all } t\in \R.
\end{aligned}
\right.
\end{equation}

Using L'Hopital's rule one has 
\begin{equation}\label{limit}
\lim_{t\rightarrow \infty}X(t)= \lim_{r\rightarrow \infty}\frac{ru'(r)}{u(r)}=\lim_{r\rightarrow \infty}\Big(1+\frac{ru''(r)}{u'(r)}\Big)=\lim_{t\rightarrow \infty}\Big(\frac{1}{k}Z(t)+\frac{k-N}{k}\Big),
\end{equation}
provided the limit $\lim_{t\rightarrow \infty}Z(t)$ exists. Thus, it is enough to study the system consisting of
the last three equations of \eq{th2a} which we arrange in the form
\begin{equation}\label{th2b}
\zeta_{t}= g(\zeta) \quad\mbox{ in } \R,
\end{equation}
where
\begin{equation}\label{zet}
\zeta(t)=\left[\begin{array}{c}Y(t)\\Z(t)\\W(t)\end{array}\right] \quad\mbox{ and } \quad g(\zeta)=  \left[\begin{array}{c} Y\Big(\frac{2k-N}{k}-Y+\frac{1}{k}W\Big)\\Z\Big(\frac{kN-m(N-k)}{k}-\frac{k-m}{p-1}Z+pY\Big) \\  W\Big(\frac{kN-q(N-k)}{p-1}+s Y+\frac{q}{k}Z-W\Big)  \end{array}\right].
\end{equation}
Among all the equilibrium points of \eq{th2b}-\eqref{zet}, only one has all components strictly positive namely
\begin{equation}\label{eqinf}
\zeta_\infty=\left[\begin{array}{c}Y_\infty\\Z_\infty\\W_\infty\end{array}\right],
\end{equation}
where
\begin{equation}\label{th2c}
\left\{
\begin{aligned}
&\frac{2k-N}{k}-Y_\infty+\frac{1}{k}W_\infty=0,  \\
&\frac{kN-m(N-k)}{k}-\frac{k-m}{k}Z_\infty+pY_\infty=0, \\
&\frac{kN-q(N-k)}{k}+s Y_\infty+\frac{q}{k}Z_\infty-W_\infty=0.
\end{aligned}
\right.
\end{equation}
Solving \eqref{th2c} we find
\begin{equation}\label{th2d}
\left\{
\begin{aligned}
&Y_\infty=\frac{kq+2k(k-m)}{\delta}, \\
&Z_\infty=\frac{kp}{k-m }Y_\infty+N+\frac{km}{k-m}, \\
&W_\infty= kY_\infty+N-2k.
\end{aligned}
\right.
\end{equation}

\begin{lemma}\label{l2b}
The equilibrium point $\zeta_\infty$ is asymptotically stable.
\end{lemma}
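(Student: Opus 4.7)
The plan is to establish local asymptotic stability of $\zeta_\infty$ by linearization: I would compute the Jacobian $J=\nabla g(\zeta_\infty)$, read off the coefficients of its characteristic polynomial
$$
P(\lambda)=\lambda^{3}+c_{1}\lambda^{2}+c_{2}\lambda+c_{3},
$$
and verify the Routh--Hurwitz conditions $c_{1},c_{3}>0$ and $c_{1}c_{2}>c_{3}$. These together are equivalent to all roots of $P$ lying in the open left half-plane, and by the standard linearization theorem they imply that $\zeta_\infty$ is asymptotically stable.

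First, differentiating $g$ as given in \eqref{zet} and evaluating at $\zeta_\infty$, the three equilibrium identities \eqref{th2c} collapse the constant part of each diagonal entry, leaving the Metzler (cooperative) matrix
$$
J=\begin{pmatrix}-Y_\infty & 0 & Y_\infty/k \\ pZ_\infty & -\tfrac{k-m}{k}Z_\infty & 0 \\ sW_\infty & \tfrac{q}{k}W_\infty & -W_\infty\end{pmatrix}.
$$
Setting $A=(k-m)/k>0$ and $B=(k-s)/k>0$ (the second positivity follows from $k>m$ together with $\delta>0$, which forces $s<k$ since $pq\geq 0$), a direct calculation yields
$$
c_{1}=Y_\infty+A\,Z_\infty+W_\infty,\qquad c_{2}=A\,Y_\infty Z_\infty+B\,Y_\infty W_\infty+A\,Z_\infty W_\infty,
$$
$$
c_{3}=-\det J=\frac{Y_\infty Z_\infty W_\infty}{k^{2}}\bigl[(k-m)(k-s)-pq\bigr]=\frac{Y_\infty Z_\infty W_\infty}{k^{2}}\,\delta.
$$
Each of $c_{1},c_{2},c_{3}$ is manifestly strictly positive; in particular the positivity of $c_{3}$ is exactly where the hypothesis $\delta>0$ enters.

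The only step requiring some care is the inequality $c_{1}c_{2}>c_{3}$. I would expand $c_{1}c_{2}$ in the monomial basis $\{Y_\infty^{i}Z_\infty^{j}W_\infty^{\ell}: i+j+\ell=3\}$; out of the nine terms produced, the single $AB\,Y_\infty Z_\infty W_\infty$ term (arising from the cross product of $A Z_\infty$ in $c_{1}$ with $B Y_\infty W_\infty$ in $c_{2}$) is exactly what is needed to absorb the $AB$-piece of $c_{3}=(AB-pq/k^{2})\,Y_\infty Z_\infty W_\infty$. After this cancellation one is left with
$$
c_{1}c_{2}-c_{3}=A\,Y_\infty^{2}Z_\infty+B\,Y_\infty^{2}W_\infty+A^{2}Y_\infty Z_\infty^{2}+A^{2}Z_\infty^{2}W_\infty+B\,Y_\infty W_\infty^{2}+A\,Z_\infty W_\infty^{2}+\Bigl(2A+\tfrac{pq}{k^{2}}\Bigr)Y_\infty Z_\infty W_\infty,
$$
which is visibly positive. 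The main (and really only) obstacle is this algebraic bookkeeping; the fact that $\delta$ appears in $c_{3}$ precisely as the excess absorbed by one monomial of $c_{1}c_{2}$ is what makes the Routh--Hurwitz inequality go through without any extra assumption beyond $\delta>0$ and $k>m$, completing the proof.
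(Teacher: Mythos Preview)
Your proof is correct and follows essentially the same approach as the paper: linearize at $\zeta_\infty$, compute the Jacobian and the characteristic polynomial $P(\lambda)=\lambda^{3}+c_{1}\lambda^{2}+c_{2}\lambda+c_{3}$, and verify the Routh--Hurwitz condition $c_{1}c_{2}>c_{3}$. The only difference is cosmetic: the paper proves the stronger inequality $c_{1}c_{2}>9c_{3}$ via AM--GM (and then argues the negative-real-part conclusion by hand, evaluating $P(-c_{1})<0$, rather than citing Routh--Hurwitz by name), whereas you expand $c_{1}c_{2}-c_{3}$ directly and observe that the monomial $AB\,Y_\infty Z_\infty W_\infty$ already dominates $c_{3}$.
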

\begin{proof}
Using the equalities in \eq{th2c} we compute the linearized matrix of \eq{th2b} at $\zeta_\infty$  as follows:
$$
M_\infty=\left[\begin{array}{ccc}-Y_\infty&0&\frac{1}{k}Y_\infty\\ pZ_\infty&-\frac{k-m}{k}Z_\infty&0\\ s W_\infty&\frac{q}{k}W_\infty&-W_\infty\end{array}\right].
$$
Thus, the characteristic polynomial of $M_\infty$ is
$$
P(\lambda)=\det(\lambda I-M)=\lambda^{3}+a\lambda^{2}+b \lambda+c,
$$
where
\begin{subequations}
\begin{align}
a&=Y_\infty+\frac{k-m}{k}Z_\infty+W_\infty, \label{ee1}\\
b&=\frac{k-m}{k}Y_\infty Z_\infty+\frac{k-s}{k}Y_\infty W_\infty+\frac{k-m}{k}Z_\infty W_\infty, \label{ee2}\\
c&= \frac{\delta}{k^2}Y_\infty Z_\infty W_\infty.\label{ee3}
\end{align}
\end{subequations}

We divide our argument into two steps. 
\medskip

\noindent{\it Step 1:}  $ab>9c$. Note that from $\delta>0$, $k>m$ and \eqref{deltat} we have $k>s$. Hence, using $Y_\infty$, $Z_\infty$, $W_\infty>0$ we estimate \eqref{ee1} as follows
$$
a\geq \frac{k-s}{k}Y_\infty+\frac{k-m}{k}Z_\infty+\frac{k-s}{k}W_\infty.
$$
Thus, by AM-GM inequality we find
$$
a\geq 3\frac{(k-m)^{\frac{1}{3}}(k-s)^{\frac{2}{3}}}{k} (Y_\infty Z_\infty W_\infty)^{\frac{1}{3}}.
$$
In a similar fashion, from \eqref{ee2} and AM-GM inequality we estimate 
$$
b\geq
3\frac{(k-m)^{\frac{2}{3}}(k-s)^{\frac{1}{3}}}{p-1} (Y_\infty Z_\infty W_\infty)^{\frac{2}{3}}.
$$
We now multiply the above inequalities to deduce
$$
ab\geq 9 \frac{(k-m)(k-s)}{k^2} (Y_\infty Z_\infty W_\infty)>9c.
$$

\medskip

\noindent{\it Step 2:} all three roots $\lambda_1$, $\lambda_2$ and $\lambda_3$ of the characteristic
polynomial $P(\lambda)$ of $M_\infty$ have negative real part. Indeed, if $\lambda_i \in \R$, for all
$i=1$, $2$, $3$ then, since $a,b,c>0$ it follows $P(\lambda)>0$ for all $\lambda \geq 0$ so that $\lambda_i< 0$ for all
$i=1$, $2$, $3$. If $P$ has exactly one real root, say $\lambda_1 \in \R$, then ${\rm Re}(\lambda_2)={\rm Re}(\lambda_3)$. Using $P(-a)=-ab+c<0$, it follows that $\lambda_1> -a$. Since $\lambda_1+\lambda_2+\lambda_3= -a$ we easily deduce that ${\rm Re}(\lambda_2)={\rm Re}(\lambda_3)<0$. This proves that $\zeta_\infty$ is asymptotically stable.

\end{proof}

\medskip

\begin{lemma}\label{l2c}
The following estimates hold for all $t\in \R$:
\begin{equation}\label{infy}
0<Y(t)< Y_\infty\,,\qquad N+\frac{km}{k-m}<Z(t)<Z_\infty\,,\qquad N<W(t)<W_\infty.
\end{equation}
\end{lemma}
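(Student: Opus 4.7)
The lower bounds come essentially for free. Indeed, $Y(t)>0$ follows from $v,v'>0$ on $(0,R)$ (established in the opening of Section~2), while rewriting the inequalities \eqref{k4} and \eqref{k6} of Lemma~\ref{firstestim} in terms of $Z$ and $W$ gives respectively $Z(t)>N+\frac{km}{k-m}$ and $W(t)>N$. For the upper bounds, the plan is to exploit the cooperative structure of the reduced system \eqref{th2b}. A direct calculation from the expression of $g$ in \eqref{zet} gives the off-diagonal Jacobian entries $\partial g_1/\partial Z=0$, $\partial g_1/\partial W=Y/k$, $\partial g_2/\partial Y=pZ$, $\partial g_2/\partial W=0$, $\partial g_3/\partial Y=sW$, $\partial g_3/\partial Z=qW/k$; all of them are non-negative on the positive octant, so $g$ is cooperative there, and $\xi\equiv\zeta_\infty$ is a stationary solution of \eqref{th2b} to which Theorem~\ref{comparis} applies as the constant reference solution.

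The heart of the argument is determining $\zeta(t)$ as $t\to-\infty$, i.e.\ $r\to 0^+$. Because $u(0),v(0)>0$ are finite with $u'(0)=v'(0)=0$, integrating \eqref{main} near the origin and matching leading powers (or, rigorously, running a contraction argument on the integrated system \eqref{t1}--\eqref{t3} in the spirit of \cite{FV2017,BFP2015}) yields
\[
u'(r)\sim c_1\, r^{k/(k-m)},\qquad v'(r)\sim c_2\, r^{1+q/(k-m)}\qquad\text{as } r\to 0^+,
\]
for explicit positive constants $c_1,c_2$ depending on $u(0),v(0)$. Substituting into \eqref{xyzw} gives $Y(t)\to 0$, $Z(t)\to N+\frac{km}{k-m}$ and $W(t)\to N+\frac{qk}{k-m}$ as $t\to-\infty$. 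A short computation using \eqref{th2d} and the assumption $\delta>0$ then shows that each of these three limits is strictly less than the corresponding component of $\zeta_\infty$: for instance $Z_\infty-\bigl(N+\frac{km}{k-m}\bigr)=\frac{kp}{k-m}\,Y_\infty>0$, and an analogous algebraic identity (linear in $p,s$ with positive coefficients) gives $W_\infty-\bigl(N+\frac{qk}{k-m}\bigr)>0$.

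Fix now $t_0$ so negative that $\zeta(t_0)<\zeta_\infty$ componentwise. Applying the strict comparison of Theorem~\ref{comparis} on each interval $[t_0,T]$ to the two solutions $\zeta$ and $\xi\equiv\zeta_\infty$ of the cooperative system \eqref{th2b} yields $\zeta(t)<\zeta_\infty$ for all $t\geq t_0$; since $t_0$ can be pushed arbitrarily far into the past, we obtain the upper bounds in \eqref{infy} on all of $\mathbb{R}$. The main technical obstacle is the precise near-origin asymptotic expansion that delivers the limits of $Y,Z,W$ at $-\infty$; the power-matching ansatz is easy to guess from the form of \eqref{main}, but it must be justified by a fixed-point argument on the integrated system, after which verifying the strict inequalities with $\zeta_\infty$ reduces to a routine algebraic manipulation of the formulas \eqref{th2d}.
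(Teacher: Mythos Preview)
Your overall architecture matches the paper's: the lower bounds come from Lemma~\ref{firstestim} exactly as you say, and the upper bounds are obtained by comparing $\zeta$ with the stationary solution $\zeta_\infty$ via Theorem~\ref{comparis}, once one knows that $\zeta(t_0)<\zeta_\infty$ for arbitrarily negative~$t_0$. The substantive difference is in how the behaviour of $\zeta$ as $t\to-\infty$ is obtained. The paper does \emph{not} compute the precise asymptotics of $u',v'$ near the origin; instead it applies Cauchy's (generalized) Mean Value Theorem to the quotients defining $Z$ and $W$, deriving the functional identities \eqref{th2f} and \eqref{th2g}, from which $\lim_{t\to-\infty}Z(t)=N+\tfrac{km}{k-m}$ follows directly, while the inequality $W(t_j)<W_\infty$ along a sequence is obtained by contradiction (Step~3). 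Your route---reading off $u'(r)\sim c_1 r^{k/(k-m)}$ and $v'(r)\sim c_2 r^{1+q/(k-m)}$ and substituting into \eqref{xyzw}---is more direct and in fact gives more, namely the full limit $\lim_{t\to-\infty}W(t)=N+\tfrac{qk}{k-m}$ rather than only a sequence. One remark: you do not need a contraction or fixed-point argument for these asymptotics. Since $v(0)>0$, integrating the first equation of \eqref{k3} immediately yields $(u')^{k-m}\sim \tfrac{k-m}{k(k+L)}\,v(0)^p\,r^k$; feeding this into the second integrated equation then gives the asymptotics of $v'$ by the same elementary argument. Finally, your claimed inequality $W_\infty>N+\tfrac{qk}{k-m}$ unpacks to $(k-m)s+pq>0$, which needs $p>0$ (since $q>0$); the paper's Step~2 has the same hidden requirement via $Z_\infty-\bigl(N+\tfrac{km}{k-m}\bigr)=\tfrac{kp}{k-m}Y_\infty$, so this is not a discrepancy between the two approaches.
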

\begin{proof}
We proceed into four steps.
\medskip

\noindent{\it Step 1: \it Preliminary estimates:} 
\begin{equation}\label{stepp1}
Z(t)> N+\frac{km}{k-m}\,, \quad W(t)> N\quad\mbox{ for all } t\in \R\quad\mbox{
and }\lim_{t\rightarrow -\infty}Y(t)=0.
\end{equation}

\medskip

The lower bounds for $Z$ and $W$ follow from \eqref{k4} and \eqref{k5} in Lemma \ref{firstestim}.
Since $v'(0)=0$ and $v(0)>0$ we have $\lim_{t\rightarrow -\infty}Y(t)= \lim_{r\rightarrow 0}\frac{rv'(r)}{v(r)}= 0$.

\bigskip

\noindent{\it Step 2: There exists $T\in \R$ such that $Z(t)< Z_\infty$ for all $t\in (-\infty, T]$.}
\medskip

It is enough to show that
\begin{equation}\label{zlim}
\lim_{t\to-\infty} Z(t)=N+\frac{km}{k-m}.
\end{equation}
To this aim, we shall use the Generalized Mean Value
Theorem\footnote{Generalized Mean Value Theorem (or Cauchy's Theorem) states that if
$f$,$g: [a, b]\rightarrow \R$ are differentiable functions on $(a, b)$ and continuous on $[a, b]$,
then there exists $c\in (a, b)$ such that $\frac{f(b)-f(a)}{g(b)-g(a)}= \frac{f'(c)}{g'(c)}$.}
\cite[Theorem 5.9, page 107]{R1976}.

Let $t\in (-\infty, 0)$ and $r= e^t \in (0, 1)$. 
From the first equation of \eqref{k2} we have
$$
\frac{d}{dr}\Big[(u')^{k-m}(r)\Big]=\frac{k-m}{k}\Big[ r^{k-1} v^{p}(r) -\frac{N-k}{r}(u'(r))^{k-m}\Big].
$$
Using this fact and the Generalized Mean Value
Theorem, there exists $c\in (0, r)$ such that
\begin{equation*}
\begin{aligned}
Z(t)&= \frac{rv^{m}(r)}{(u')^{k-m}(r)}=\frac{\displaystyle\frac{d}{dr}\Big[rv^{m}(r)\Big](c)}{\displaystyle \frac{d}{dr}\Big[(u')^{k-m}(r)\Big](c)}\\
&= \frac{k}{k-m}\cdot \frac{c^{k-1} v^p(c) +pc^k v^{p-1}(c) v'(c)}{c^{k-1} v^{p}(c) -\frac{N-k}{r}(u'(c))^{k-m}}.
\end{aligned}
\end{equation*}
Hence
\begin{equation}\label{th2f}
Z(t)= \frac{k}{k-m}\cdot\frac{Z({\rm ln}c)}{Z({\rm ln}c)-(N-k)} \Big(k+pY({\rm ln}c)\Big).
\end{equation}
Recall that from Step 1 above we have 
\begin{equation}\label{z1}
Z(t)>N+\frac{km}{k-m}\quad\mbox{ for all }\quad t\in\R.
\end{equation}
Thus,
$$
\frac{Z}{Z-(N-k)}<\frac{k-m}{k^2}\cdot \Big( N+\frac{km}{k-m}\Big).
$$
It now follows from \eq{th2f} that
\begin{equation}\label{z2}
\lim{\rm sup}_{t\rightarrow -\infty}Z(t)\leq \frac{1}{k}\Big( N+\frac{km}{k-m}\Big) \big(k+p\lim_{t\rightarrow -\infty}Y(t)\big)= N+\frac{km}{k-m}.
\end{equation}
From \eqref{z1}, the opposite inequality is also true. Hence \eqref{zlim} holds which implies that there exists $T\in \R$ such that $Z(t)< Z_\infty$ for all $t\leq T$.

\bigskip

\noindent{\it Step 3:
There exists a sequence $t_j \rightarrow -\infty$ such that
\begin{equation}\label{tj}
Y(t_j)< Y_\infty\,, \quad Z(t_j)< Z_\infty \quad \mbox{ and } \quad W(t_j)< W_\infty \quad\mbox{ for all } j\geq 1.
\end{equation}
}
Suppose the above inequalities do not hold. By taking $T\in \R$ sufficiently close to $-\infty$ and in light of the estimates already obtained at Step 1 and 2 above, we may assume
\begin{equation}\label{asump}
Y(t)< Y_\infty\,, \quad Z(t)< Z_\infty \quad \mbox{ and }\quad W(t)\geq W_\infty \quad\mbox{ for all } t\leq T.
\end{equation}
Using these estimates in the differential equation satisfied by $W$ in \eq{th2a} we deduce $W_t< 0$ on $(-\infty, T]$. Thus, $W$ is decreasing
in a neighbourhood of $-\infty$ and thus exists
$$
\ell:= \lim_{t\rightarrow -\infty} W(t)= \lim_{r\rightarrow 0}\frac{rv^{s}(r)(u'(r))^{q}}{(v'(r))^k}.
$$
From \eqref{asump} one has
\begin{equation}\label{ell1}
\ell\geq W_\infty.
\end{equation}
On the other hand, using \eqref{k1} one has
$$
\begin{aligned}
\frac{d}{dr}\big[(v')^{k}\big]'(r)=&\; r^{k-1} (u'(r))^{q} v^{s}(r) -\frac{N-k}{r}(v'(r))^{k}\\
\frac{d}{dr}\big[r^k v^s(u')^{q}\big]'(r)=&\; kr^{k-1}v^s(r)(u'(r))^q+sr^k v'(r)(u'(r))^q+\frac{q}{k}r^k v^{s+p}(r)(u'(r))^{q-k+m}\\
&-\frac{q(N-k)}{k}r^{k-1}(u'(r))^q v^s(r).
\end{aligned}
$$

Let $t\in (-\infty, T]$ and $r= e^{t}$. Applying the Generalized Mean Value Theorem as in the previous step
and using the above equalities we find $c\in (0, r)$ such that
\begin{equation}\label{th2g}
W(t)= \frac{W({\rm ln}c)}{W({\rm ln}c)-(N-k)}\left[k-\frac{q(N-k)}{k}+s Y({\rm ln}c)+\frac{q}{k}Z({\rm ln}c)\right].
\end{equation}
Recall that by \eqref{stepp1} we have $Z> N$ and $W> N$ so that right hand side of \eq{th2g} is positive.
Letting $t\rightarrow -\infty$ (that is, $c\rightarrow 0$) in \eqref{th2g} and using
$\lim_{t\rightarrow -\infty}Z(t)< Z_\infty$ and $\lim_{t\rightarrow -\infty}Y(t)= 0$  we obtain
\begin{equation}\label{th2h}
\ell= \lim_{t\rightarrow -\infty}W(t)\leq \frac{\ell}{\ell-(N-k)}\left[k-\frac{q(N-k)}{k}+\frac{q}{k}Z_\infty\right].
\end{equation}
This yields
$$
\ell\leq N-\frac{q(N-k)}{k}+\frac{q}{k}Z_\infty.
$$
Comparing this inequality with the last equation of \eqref{th2c} we find $\ell<W_\infty$, which contradicts \eqref{ell1}. This proves that \eqref{tj} holds.

\bigskip

\noindent{\it Step 4:  Conclusion of the proof. }
\medskip

We can compare now the solution $\zeta=(Y,Z,W)$ and the equilibrium point $\zeta_\infty=(Y_\infty, Z_\infty, W_\infty)$ 
on each of the intervals $[t_j,\infty)$. Thanks to Theorem \ref{comparis} we deduce
\begin{equation}\label{et}
Y(t)< Y_\infty\,, \quad Z(t)< Z_\infty \quad \mbox{ and } \quad W(t)< W_\infty \quad\mbox{ for all } t\geq t_j.
\end{equation}
Since $t_j\to -\infty$ it follows that the estimates in \eqref{et} hold for all $t\in \R$ and this together with
Step 1 proves \eqref{infy}.
\end{proof}

We are now able to complete the proof of Theorem \ref{thm3}.  Let $(u,v)$ be a non-constant global positive radial
solution of system \eqref{hess} and denote by $(X,Y,Z,W)$ the corresponding solution of
\eqref{th2a} as described in
\eqref{xyzw}.

Then $
\zeta(t)=\left[\begin{array}{c}Y(t)\\Z(t)\\W(t)\end{array}\right] $
satisfies \eqref{th2b}-\eqref{zet}. Thus, by  Lemma \ref{l2c}
we have
$$
\zeta_*:=\left[\begin{array}{c}0\\ \displaystyle N+\frac{km}{k-m}\\N\end{array}\right]<\zeta(0).
$$
Denote by $E\subset \R^3$ the set of equilibrium points associated with \eqref{th2b}-\eqref{zet}.
From the result in Theorem \ref{lebesgue} there exists a set $\Sigma\subset \R^3$ of Lebesgue measure zero such that
\begin{equation}\label{omegaa}
\omega(\widetilde \zeta)\subseteq E\quad\mbox{ for all }\quad \widetilde \zeta\in [\zeta_*,\zeta_\infty]\setminus\Sigma.
\end{equation}
For $\widetilde \zeta\in [\zeta_*,\zeta_\infty]\setminus\Sigma$ denote by
$$
\Phi(t,\widetilde \zeta)=\left[\begin{array}{c}\widetilde{Y}(t)\\\widetilde{Z}(t)\\\widetilde{W}(t)\end{array}\right]
$$
the flow of \eqref{th2b}  associated with the initial data $\widetilde \zeta\in \R^3$. Using Theorem \ref{comparis} and the fact that $\widetilde \zeta\geq \zeta_*$ we find
$$
\Phi(t,\widetilde \zeta)\geq \left[\begin{array}{c}0\\ \displaystyle N+\frac{km}{k-m}\\0\end{array}\right]\quad\mbox{ for all }t\geq 0.
$$
Hence, $\omega(\widetilde \zeta)$ is finite and consists of equilibrium points in $E$ whose second component 
is greater than or equal to $N+\frac{km}{k-m}$. It follows that
$$
\omega(\widetilde\zeta)\subseteq \{\zeta_1,\zeta_2,\zeta_3,\zeta_\infty\},
$$
where
$$
\zeta_1=\left[\begin{array}{c}0\\ \displaystyle
N+\frac{km}{k-m}\\0\end{array}\right]\;,\quad
\zeta_2=\left[\begin{array}{c}0\\ \displaystyle
N+\frac{km}{k-m}\\
\displaystyle N+\frac{qk}{k-m}\end{array}\right]\;,\quad
\zeta_3=\left[\begin{array}{c}\displaystyle \frac{2k-N}{k}\\
\displaystyle
N+\frac{mk+p(2k-N)}{k-m}\\0\end{array}\right]
$$
and $\zeta_\infty$ is given by \eqref{eqinf}. Note, that $\zeta_3$ has all components non-negative if and only of $2k\geq N$.

We claim that
\begin{equation}\label{eqfin}
\omega(\widetilde \zeta)=\{\zeta_\infty\}\quad\mbox{ for all }\quad
\widetilde \zeta\in [\zeta_*,\zeta_\infty]\setminus\Sigma.
\end{equation}

If $\zeta_\infty\in \omega(\widetilde \zeta)$ then, using Lemma \ref{l2b} we have that $\zeta_\infty$ is asymptotically stable, so that
$\omega(\widetilde \zeta)=\{\zeta_\infty\}$ and thus, \eqref{eqfin} follows.

 Assume in the following that $\zeta_\infty\not \in \omega(\widetilde \zeta)$ so
$\omega(\widetilde \zeta)\subseteq \{\zeta_1,\zeta_2,\zeta_3\}$.

If $\{\zeta_1,\zeta_2\}\subset \omega(\widetilde \zeta)$ or $\{\zeta_2,\zeta_3\}\subset \omega(\widetilde \zeta)$ then, along a subsequence $\widetilde W$ converges to 0 and 
$N+\frac{qk}{k-m}$.  By the Intermediate Value Theorem we deduce that for all $\tau\in (0,N+\frac{qk}{k-m})$
there exists a sequence $t_j\to \infty$ such that $\widetilde W(t_j)=\tau$ which contradicts the fact that
$\omega(\tilde \zeta)$ is finite. In a similar way, $\{\zeta_1,\zeta_3\}\subset \omega(\widetilde \zeta)$ we deduce that $2k>N$
and for any $0<\gamma<\frac{2k-N}{k}$ there exists a sequence $t_j\to \infty$ such that $\widetilde Y(t_j)=\gamma$
which again contradictions the fact that $\omega(\tilde \zeta)$ is finite.

The above arguments shows that  $\omega(\widetilde \zeta)$ reduces to a single element. We show in the following that this raises again a contradiction unless  $\omega(\widetilde \zeta)=\{\zeta_\infty\}$. 
Suppose for instance that $\omega(\widetilde \zeta)=\{\zeta_2\}$, that is 
$$
\lim_{t\to \infty}\widetilde Y(t)=0\,, \quad  \lim_{t\to \infty} \widetilde Z(t)= N+\frac{km}{k-m}\quad\mbox{and}\quad  \lim_{t\to \infty} \widetilde W(t)= N+\frac{qk}{k-m}.
$$
But then, for large $t>0$ we find
$$
\widetilde Y_t=\widetilde Y\Big(\frac{2k-N}{k}-\widetilde Y+\frac{1}{k}\widetilde W\Big)>0.
$$
This implies that $\widetilde Y$ is increasing in a neighbourhood of infinity. Hence,  for large $t>0$ we have $\widetilde Y(t)\leq \lim_{s\to
\infty}\widetilde Y(s)=0$, contradiction. In a similarl way, if
$\omega(\widetilde \zeta)=\{\zeta_1\}$ or if $\omega(\widetilde \zeta)=\{\zeta_3\}$
we raise a contradiction. This finally proves \eqref{eqfin}.

Take now $\zeta\in [[\zeta_*,\zeta_\infty]]\cap \Sigma$ and let
$\widetilde \zeta\in [\zeta_*,\zeta_\infty]\setminus \Sigma$ be such that
$\widetilde \zeta<\zeta$. By the Limit Set Dichotomy Theorem
\ref{dich}  the following alternative holds:
\begin{itemize}
\item either $\{\zeta_\infty\}=\omega(\widetilde \zeta)<\omega(\zeta)$;
\item or $\omega(\zeta)=\omega(\widetilde \zeta)=\{\zeta_\infty\}$.
\end{itemize}
The first alternative cannot hold. Indeed, by the comparison result in
Theorem \ref{comparis} it follows $\omega(\zeta)\leq \zeta_\infty$ which yields $\omega(\zeta)=\{ \zeta_\infty \}$ so,
$$
\omega(\zeta)=\{\zeta_\infty\}\quad\mbox{ for all } \zeta\in [[\zeta_*, \zeta_\infty]].
$$
In particular, 
$$
\omega(\zeta(0))=\{\zeta_\infty\},
$$
that is,
\begin{equation}\label{xax1}
\lim_{t\to \infty} Y(t)= Y_\infty\,,\qquad \lim_{t\to \infty} Z(t)= Z_\infty\,,\qquad \lim_{t\to \infty} W(t)= W_\infty.
\end{equation}
Also, from  \eq{limit} one has
\begin{equation}\label{xax2}
X_\infty:= \lim_{t\rightarrow \infty}X(t)= \frac{1}{k}Z_\infty+\frac{2k-N}{k}.
\end{equation}
A direct calculation shows that
$$
\frac{u^{\delta}(r)}{r^{\delta+k(k-s+2p)} }= \frac{1}{X^{\delta}(t) Y^{kp}(t)Z^{k-s}(t)W^{p}(t)} \quad\mbox{ for all } r> 0.
$$
Now, using \eqref{xax1}-\eqref{xax2} one has
\begin{equation}\label{th2i}
\lim_{r\rightarrow \infty} \frac{u(r)}{r^{1+\frac{k(k-s+2p)}{\delta}}}= A,
\end{equation}
where
$$
A= \frac{1}{X_\infty Y_\infty^\frac{kp}{\delta}Z_\infty^{\frac{k-s}{\delta}}W_\infty^{\frac{p}{\delta}}}\in (0, \infty).
$$
In a similar fashion we obtain
$$
\frac{v^{\delta}(r)}{r^{k(2k-2m+q)} }= \frac{1}{Y^{k(k-m)}(t)Z^{q}(t)W^{k-m}(t)} \quad\mbox{ for all } r> 0
$$
and again by \eqref{xax1}-\eqref{xax2} we derive
\begin{equation}\label{th2j}
\lim_{r\rightarrow \infty} \frac{v(r)}{r^{\frac{k(2k-2m+q)}{\delta}}}= B,
\end{equation}
where
$$
B= \frac{1}{Y_\infty^\frac{(k(k-m)}{\delta}Z_\infty^{\frac{q}{\delta}}W_\infty^{\frac{k-m}{\delta}}}\in (0, \infty).
$$

\section{Proof of Theorem \ref{thm4}}

\begin{equation}\label{main1}
\left\{
\begin{aligned}
&r^{1-N}\Big[ r^{N-k}|u'|^{k-1}u' \Big]'=|u'|^{m}v^{p},\; u(r)>0 &&\quad\mbox{ for all } r>0,\\
&r^{1-N}\Big[ r^{N-k}|v'|^{k-1}v' \Big]'=|u'|^{q} v^{s} ,\; v(r)>0 &&\quad\mbox{ for all } r>0,\\
&u'(0)=v'(0)=0, u(0)=a>0, v(0)=b>0.
\end{aligned}
\right.
\end{equation}
We use the change of variable
\begin{equation}\label{chvar}
r=t^{\theta},\quad \theta=-\frac{k}{N-2k}<0,
\end{equation}
and let 
$$
u(r)=U(t), \;\; v(r)=V(t).
$$
Then
$$
u'(r)=\frac{du}{dr}(r)=\frac{1}{\theta}t^{1-\theta}U_t(t)\quad\mbox{ and }\quad 
v'(r)=\frac{dv}{dr}(r)=\frac{1}{\theta}t^{1-\theta}V_t(t).
$$
Thus, any solution $(u, v)$ of \eqref{main1} satisfies
\begin{equation}\label{m1}
\left\{
\begin{aligned}
&\Big(|U_t|^{k-m-1}U_t \Big)_t=\frac{k}{k-m} |\theta|^{k-m+1} t^{(1-\theta)(m-1)-\theta(N-1)} V^p, \; U(t)>0 &&\quad\mbox{ for all } t>0,\\
&\Big(|V_t|^{k-1}V_t \Big)_t=|\theta|^{k-q+1} t^{(1-\theta)(q-1)-\theta(N-1)} |U_t|^q V^s, \; V(t)>0&&\quad\mbox{ for all } t>0,\\
& U_t, V_t<0, U(\infty)=u(0)>0, V(\infty)=v(0)>0, U_t(\infty)=V_t(\infty)=0.   
\end{aligned}
\right.
\end{equation}
Set 
$$
W(t)=|U_t|^{k-m-1}U_t \Longrightarrow |U_t|=|W|^{\frac{1}{k-m}},
$$
and now \eqref{m1} reads
\begin{equation}\label{m2}
\left\{
\begin{aligned}
&W_t=f(t)V^p, \; W(t)<0 &&\quad\mbox{ for all } t>0,\\
&\Big(|V_t|^{k-1}V_t \Big)_t=g(t) V^s |W|^{\frac{q}{k-m}}, \; V(t)>0&&\quad\mbox{ for all } t>0,\\
& V_t<0, W(\infty)=0, V(\infty)=v(0)>0, W(\infty)=V_t(\infty)=0.   
\end{aligned}
\right.
\end{equation}
where
$$
\left\{
\begin{aligned}
&f(t)=\frac{k}{k-m} |\theta|^{k-m+1} t^{(1-\theta)(m-1)-\theta(N-1)} &&\quad\mbox{ for all } t>0,\\
&g(t)=|\theta|^{k-q+1} t^{(1-\theta)(q-1)-\theta(N-1)}  V^s |W|^{\frac{q}{k-m}}&&\quad\mbox{ for all } t>0.
\end{aligned}
\right.
$$
Let now $(u,v)$ and $(\tilde u, \tilde v)$ be two solutions of \eqref{main1}. We want to show $u\equiv \tilde u$ and $v\equiv \tilde v$. Define $(U, V,W)$ and $(\tilde U, \tilde V, \tilde W)$ as above which satisfy problem \eqref{m1} and respectively
\begin{equation}\label{m3}
\left\{
\begin{aligned}
&\tilde W_t=f(t)\tilde V^p, \; \tilde W(t)<0 &&\quad\mbox{ for all } t>0,\\
&\Big(|\tilde V_t|^{k-1}\tilde V_t \Big)_t=g(t) \tilde V^s |\tilde W|^{\frac{q}{k-m}}, \; \tilde V(t)>0&&\quad\mbox{ for all } t>0,\\
& \tilde V_t<0, \tilde W(\infty)=0, \tilde V(\infty)=\tilde v(0)=b>0, \tilde W(\infty)=\tilde V_t(\infty)=0.   
\end{aligned}
\right.
\end{equation}
Let $\varepsilon >0$ be small and define
$$
u^\ve(r)=(1+\varepsilon)u(r),\quad v^\varepsilon (r)=(1+\varepsilon)^{\frac{k-m}{p}}v(r)\quad\mbox{ for all }r\geq 0.
$$
With the same change of variables \eqref{chvar} and 
$$
u^\ve(r)=U^\ve(t), \;\; v^\ve(r)=V^\ve(t),\;\;  W^\ve(t)=|U_t^\ve|^{k-m-1}U^\ve_t
$$ 
we obtain that $(W^\ve, V^\ve)$ satisfies
\begin{equation}\label{m4}
\left\{
\begin{aligned}
&W_t^\ve=f(t)(V^\ve)^p, \; W^\ve(t)<0 &&\quad\mbox{ for all } t>0,\\
&\Big(|V_t^\ve|^{k-1}V_t^\ve \Big)_t=(1+\ve)^{\frac{\delta}{p}} g(t) (V^\ve)^s |W^\ve|^{\frac{q}{k-m}}, \; V(t)>0&&\quad\mbox{ for all } t>0,\\
& V_t^\ve<0, W^\ve(\infty)=0, V^\ve(\infty)=(1+\ve)^{\frac{k-m}{p}}v(0)>0, W^\ve(\infty)=V^\ve_t(\infty)=0,  
\end{aligned}
\right.
\end{equation}
where $\delta$ is given by \eqref{deltat}.

Observe that $V^\ve(\infty)>\tilde V(\infty)$. Thus, from the first equation in \eqref{m3} and \eqref{m4} we find  that $W_t^\ve>\tilde W$ in a neighbourhood of infinity. Thus, the set
$$
M:=\{t>0: W_t^\ve>\tilde W \mbox{ on }(t, \infty)\}
$$
is non-empty. Set $t_0:=\inf M\geq 0$. 

\medskip

\noindent{\bf Claim: } $t_0=0$.

Assume by contradiction that $t_0>0$.  Then by continuity arguments one has
\begin{equation}\label{m5}
W_t^\ve>\tilde W\;\;\mbox{ on }\;\; (t_0, \infty) \quad\mbox{ and }W_t^\ve(t_0)=\tilde W_t(t_0).
\end{equation}
It follows from the first equation in \eqref{m3} and \eqref{m4} that
\begin{equation}\label{m6}
V^\ve>\tilde V\;\;\mbox{ on }\;\; (t_0, \infty) \quad\mbox{ and }V^\ve(t_0)=\tilde V(t_0).
\end{equation}
We next integrate over $[r, \infty]$ in \eqref{m5} to deduce
$$
|W^\ve(t)|=-W^\ve(t)>-\tilde W(t)=|\tilde W(t)|\quad\mbox{ for all }t>t_0.
$$
Using this fact in the second equation of \eqref{m3} and \eqref{m4} one has
$$
\Big(|V_t^\ve|^{k-1}V_t^\ve \Big)_t>\Big(|\tilde V_t|^{k-1}\tilde V_t \Big)_t \quad\mbox{ for all }t>t_0.
$$
An integration over $[t,\infty]$ in the above inequality yields
$$
-V_t^\ve(t)=|V_t^\ve(t)|>|\tilde V_t(t)|=-\tilde V(t)\quad\mbox{ for all }t>t_0.
$$
A further integration over $[t_0,\infty]$ in the above inequality together with the fact that $V^\ve(\infty)>\tilde V(\infty)$ implies 
$$
V^\ve(t_0)>\tilde V(t_0),   
$$
which contradicts the equality in \eqref{m6}. Hence, $t_0=0$ which proves our claim. This further yields
\begin{equation}\label{m7}
W^\ve_t>\tilde W_t\quad\mbox{ on }(0, \infty),
\end{equation} 
and integrating this inequality over $[t, \infty]$ one gets $|W^\ve(t)|>|\tilde W(t)|$ for all $t>0$. Hence
$$
|U^\ve(t)|>|\tilde U(t)|\quad\mbox{ for all }t>0.
$$
A further integration implies $U^\ve>\tilde U$ on $(0, \infty)$, that is, 
$$
U^\ve(t)=(1+\ve)u(r)>\tilde u(r)\quad\mbox{ for all }r>0.
$$
Passing to the limit with $\ve\to 0^+$ one has $u\geq \tilde u$ on $(0, \infty)$. Also, \eqref{m7} and the first equation in \eqref{m3} and \eqref{m4} imply $V^\ve>\tilde V$ on $(0, \infty)$ which yield $v\geq \tilde v$ on $(0, \infty)$. Hence, we have argued that $u\geq \tilde u$ and $v\geq \tilde v$ on $(0, \infty)$. Similarly $\tilde u\geq u$ and $\tilde v\geq v$ on $(0, \infty)$ which yields $u\equiv \tilde u$ and $v=\tilde v$.

\end{document}